\newtheorem{thm}{Theorem}[section]
\newtheorem{prop}[thm]{Proposition}
\newtheorem{lem}[thm]{Lemma}
\newtheorem{cor}[thm]{Corollary}
\theoremstyle{remark}
\newtheorem{rem}[thm]{Remark}
\theoremstyle{definition}
\newcommand*{\rom}[1]{\expandafter\@slowromancap\romannumeral #1@} % roman typographie
\renewcommand{\phi}{\varphi} % nicer phi
\newcommand{\tr}{\mathrm{Tr}} % trace
\newcommand{\E}{\mathrm{E}} % expectation
\newcommand{\A}{\mathfrak A} % sigma algebra
\newcommand{\F}{\mathcal F} % filtration
\newcommand{\sgn}{\mathrm{sgn}} % sign function
\renewcommand{\Re}{\mathrm{Re}} % nicer real part
\renewcommand{\Im}{\mathrm{Im}} % nicer imagrinary part
\renewcommand{\>}{\rangle}
\newcommand{\filt}{(\F_t)_{t\geq 0}} % filtration
\newcommand{\fraum}{(\Omega,\filt,\A,P)} % filtered probability space#
\newcommand{\skor}{\mathbb D(\mathbb R^d)} % Skorokhod space
\newcommand{\canspace}{(\skor,\mathfrak D,(\mathcal D_t)_{t\geq 0})} % the canonical space
\newcommand{\nooutput}[1]{}
\begin{document}

\title[Optimal bounds for densities of SDEs]{Optimal bounds for the densities of solutions of SDEs with measurable and path dependent drift coefficients}
\date{\today}

\author[Banos]{David Ba\~nos}
\address[David Ba\~nos]{\\
Department of Mathematics \\
University of Oslo\\
P.O. Box 1053, Blindern\\
N--0316 Oslo, Norway}
\email[]{davidru@math.uio.no}
\author[Kr\"uhner]{Paul Kr\"uhner}
\address[Paul Kr\"uhner]{\\
Department of Mathematics \\
University of Oslo\\
P.O. Box 1053, Blindern\\
N--0316 Oslo, Norway}
\email[]{paulkru@math.uio.no}

\keywords{Pathwise SDEs, density bounds, irregular drift.}
\subjclass[2010]{60H10, 49N60}

\thanks{This paper has been developed under financial support of the project "Managing Weather Risk in
Electricity Markets" (MAWREM), funded by the RENERGI-program of the Norwegian Research Council.}

\begin{abstract}
We consider a process given as the solution of a stochastic differential equation with irregular, path dependent and time-inhomogeneous drift coefficient and additive noise. Explicit and optimal bounds for the Lebesgue density of that process at any given time are derived. The bounds and their optimality is shown by identifying the worst case stochastic differential equation. Then we generalise our findings to a larger class of diffusion coefficients. 
\end{abstract}

\maketitle

\section{Introduction}

The study of regularity of solutions %densities
 of stochastic differential equations (SDEs) has been a topic of great %considerable
 interest within stochastic analysis, especially since Malliavin calculus was founded. One of the main motivations of Malliavin calculus is precisely to study the regularity properties of the law of Wiener functionals, for instance, solutions to SDEs, as well as, properties of their densities. A classical result on this subject is that if the coefficients of an SDE are %bounded
 $C^{\infty}$ functions with bounded derivatives of any order and the so-called H\"{o}rmander's condition (see e.g. \cite{hormander.69}) holds, then the solution of the equation is smooth in the Malliavin sense. Then P. Malliavin shows in \cite{malliavin.78} that the laws of the solutions at any time are absolutely continuous with respect to the Lebesgue measure and the densities are smooth and bounded. %added two words at the end 
Another approach %for when the laws of Wiener functionals are absolutely continuous with respect to the Lebesgue measure 
is attributed to N. Bouleau and F. Hirsch where they show in \cite{bouleau.hirsch.86} absolute continuity of the finite dimensional laws of solutions to SDEs based on a stochastic calculus of variations in finite dimensions where they use %and is then uses
 a limit argument. Also, as a motivation of \cite{bouleau.hirsch.86}, D. Nualart and M. Zakai \cite{nualart.zakai.89} found related results on the existence and smoothness of conditional densities of Malliavin differentiable random variables.

It appears to be quite difficult to derive regularity properties for the densities of solutions to SDEs with singular coefficients, i.e. non-Lipschitz coefficients, in particular in the drift. Nevertheless, some findings in this direction have been attained. Let us for instance remark here the work by M. Hayashi, A. Kohatsu-Higa and G. Y\^{u}ki in \cite{hayashi.et.al.12} where the authors show that SDEs with H\"{o}lder continuous drift and smooth elliptic diffusion coefficients admit H\"{o}lder continuous densities at any time. Their techniques are mainly based on an integration by parts formula (IPF) in the Malliavin setting and estimates on the characteristic function of the solution in connection with Fourier's inversion theorem. Another result in this direction is due to S. De Marco in \cite{de.marco.11} where the author proves smoothness of the density on an open domain under the usual condition of ellipticity and that the coefficients are smooth on such domain. A remarkable fact is that H\"{o}rmander's condition is skipped in this proof. Moreover, estimates for the tails are also given. The technique relies strongly on Malliavin calculus and an IPF together with estimates on the Fourier transform of the solution. One may already observe that integration by parts formulas in the Malliavin context are a powerful tool for the investigation of densities of random variables as it is the case in the work by V. Bally and L. Caramellino in \cite{bally.caramellino.11} where an IPF is derived and the integrability of the weight obtained in the formula gives the desired regularity of the density. As a consequence of the aforesaid result \mbox{D. Ba\~{n}os} and T. Nilssen give in \cite{banos.nilssen.14} a criterion to obtain regularity of densities of solutions to SDEs according to how regular the drift is. The technique is also based on Malliavin calculus and a sharp estimate on the moments of the derivative of the flow associated to the solution. This result is a slight improvement of a very similar criterion obtained by S. Kusuoka and D. Stroock in \cite{kusuoka.stroock.82} when the diffusion coefficient is constant and the drift may be unbounded. Another related result on upper and lower bounds for densities is due to V. Bally and A. Kohatsu-Higa in \cite{bally.kohatsu-higa.10} where bounds for the density of a type of a two-dimensional degenerated SDE are obtained. For this case, it is assumed that the coefficients are five times differentiable with bounded derivatives. Finally, we also mention the results by A. Kohatsu-Higa and A. Makhlouf in \cite{kohatsu.makhlouf.13} where the authors show smoothness of the density for smooth coefficients that may also depend on an external process whose drift coefficient is irregular. They also give upper and lower estimates for the density.

It is worth alluding the exceptional result by A. Debussche and N. Fournier in \cite{debussche.fournier.13} on this topic where the authors show that the finite dimensional densities of a solution of an SDE with jumps lies in a certain (low regular) Besov space when the drift is H\"{o}lder continuous. The novelty is that their method does not use Malliavin calculus as in the aforementioned works.

It is therefore important to highlight that in this paper we do \emph{not} use Malliavin calculus or any other type of variational calculus and we see this as an alternative perspective for studying similar problems. Instead, we employ control theory techniques to, shortly speaking, reduce the overall problem to a critical case for which many results in the literature are available. In particular, our technique entitles us to find a \emph{worst case} SDE whose solution has an explicit density that dominates all densities of solutions to SDEs among those with measurable bounded drifts.

We believe this method is robust since no well-behaviour on the drift is needed other than merely boundedness and no Markovianity of the system is assumed. Certainly, no regularity is obtained but we are confident that the method can be exploited to gain more regularity of the densities.

This paper is organised as follows. In Section \ref{main results} we summarise our main results with some generalisations to non-trivial diffusion coefficients and to any arbitrary dimension. We also give some insight on concrete properties of the bounds as well as some examples with graphics. Section \ref{s:reduction} is devoted to thoroughly prove the assertions of the main results. More specifically, we will give an argument based on a control problem to reduce the problem to one critical case. 
We will also prove in detail the properties adduced in the previous section. 

\subsection{Notations}
We denote the strictly positive numbers by $\mathbb R_{++}:=(0,\infty)$, the trace of a matrix $M\in\mathbb R^{d\times d}$ by $\tr(M):=\sum_{j=1}^d M_{j,j}$ and $\pm$ simply denotes either $+$ or $-$. The Skorokhod space $\skor$ is the set of all c\`{a}dl\`{a}g functions from $\mathbb R_+$ to $\mathbb R^d$ equipped with the Skorokhod metric, c.f. \cite[Chapter VI.1]{js.87}. The canonical space is the triplet $\canspace$ where $\mathfrak D$ is the $\sigma$-algebra generated by the point evaluations and $(\mathcal D_t)_{t\geq 0}$ is the right-continuous filtration generated by the canonical process $X:\mathbb R_+\times \skor \rightarrow\mathbb R^d,(t,f)\mapsto f(t)$. 
We denote the generalised signum function by $\sgn(x):=1_{\{x\neq 0\}}x/\vert x\vert$ for any $x\in\mathbb R^d$. This is the orthogonal projection to the unit Euclidean sphere. For a complex number $z\in\mathbb C$ we denote its real resp.\ imaginary part by $\Re(z)$ resp.\ $\Im(z)$.

Further notations are used as in \cite{js.87}.

\section{Main results}\label{main results}

In this section we present our main result and some direct consequences. In particular, we will find sharp explicit bounds for SDEs with additive noise in the one-dimensional case and give some extensions to the $d$-dimensional case with more general diffusion coefficients.

Throughout this section let $\fraum$ be a filtered probability space with the usual assumptions on the filtration $\mathcal{F}=(\mathcal{F}_t)_{t\geq 0}$, i.e.\ $\mathcal F_0$ contains all $P$-null sets and $\mathcal F$ is right-continuous, $W$ be a $d$-dimensional standard Brownian motion and we define the process classes
\begin{align*}
 \mathcal A_+ &:= \{u: u\text{ is a stochastic process bounded by }1\}\\%\text{ and }Z(0)=0\}
 \mathcal A &:= \{ u\in\mathcal A_+: u\text{ is }\F\text{-adapted}\}.
\end{align*}

The next results constitutes one of the core results of this section and will be proven in detail in the next section.

\begin{thm}\label{t:main theorem, multi dimensional}
 Let $C>0$, $W$ be a $d$-dimensional standard Brownian motion and $u\in\mathcal A$. Then $X(t):=\int_0^tCu(s)ds + W(t)$ has Lebesgue density
 $$\rho_t(x) := \limsup_{\epsilon\rightarrow 0}\frac{P(\vert X(t)-x\vert \leq \epsilon)}{V_{\epsilon}},\ x\in\mathbb R^d$$
where $V_\epsilon = \frac{\pi^{d/2}}{\Gamma(d/2+1)}\epsilon^d$ denotes the volume of the $d$-dimensional Euclidean ball with \mbox{radius $\epsilon$} and $\Gamma$ denotes the gamma function. Moreover, $\rho_t$ satisfies
$$0< \alpha_{d,t,C}(x) \leq  \rho_t(x) \leq \beta_{d,t,C}(x) \leq \beta_{d,t,C}(0) $$
for any $t>0$, $x\in\mathbb R$ where 
  $$\alpha_{d,t,C}(x) := \limsup_{\epsilon\rightarrow 0}\frac{P(\vert Y_{Cx}^{+}(tC^2)\vert \leq C\epsilon)}{V_\epsilon}, \ \ \beta_{d,t,C}(x) := \limsup_{\epsilon\rightarrow 0}\frac{P(\vert Y_{Cx}^{-}(tC^2)\vert \leq C\epsilon)}{V_\epsilon},$$
   and $Y_x^{+}$ and $Y_x^{-}$ are the unique solutions to the SDEs
  %$$ Y_x^{\pm}(t) = x \pm \int_0^t \sgn(Y_x^{\pm}(s))ds + W(t).$$
 \begin{align*}
  Y_x^{+}(t) &= x + \int_0^t \sgn(Y_x^{+}(s))ds + W(t), \\
  Y_x^{-}(t) &= x - \int_0^t \sgn(Y_x^{-}(s))ds + W(t)
 \end{align*}
 for any $t\geq 0$.
\end{thm}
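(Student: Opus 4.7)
The plan is to reduce the pointwise bounds on $\rho_t(x)$ to a comparison with the Kolmogorov semigroup of the extremal SDEs $Y^\pm$ via an It\^o (super/sub)martingale argument. First, the existence of a Lebesgue density for $X(t)$ is an immediate consequence of Girsanov's theorem: since $u$ is bounded, $Z_t:=\exp\bigl(-C\int_0^t u(s)\cdot dW(s)-\tfrac{C^2}{2}\int_0^t\vert u(s)\vert^2 ds\bigr)$ satisfies Novikov's criterion and the equivalent measure $dQ:=Z_t\, dP$ turns $X$ into a standard $d$-dimensional Brownian motion, so $P(X(t)\in\cdot)$ is absolutely continuous with respect to Lebesgue measure. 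The limsup in the definition of $\rho_t$ will then be pinned down once the two-sided bound below squeezes $P(\vert X(t)-x\vert\leq\epsilon)/V_\epsilon$ between $\alpha_{d,t,C}(x)+o(1)$ and $\beta_{d,t,C}(x)+o(1)$.

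For the upper bound, I would introduce
\[
\Phi(s,y):=P(\vert\tilde Y^{s,y}(t)\vert\leq\epsilon),\qquad 0\leq s\leq t,\ y\in\mathbb R^d,
\]
where $\tilde Y^{s,y}$ solves $d\tilde Y=-C\sgn(\tilde Y)\,dr+dW$ with $\tilde Y(s)=y$. By backward Kolmogorov, $\partial_s\Phi+\tfrac12\Delta\Phi-C\sgn(y)\cdot\nabla\Phi=0$ with terminal value $\mathbf 1_{\vert y\vert\leq\epsilon}$, and rotational symmetry of the drift and terminal datum gives $\Phi(s,y)=g(s,\vert y\vert)$ with $g'(s,\cdot)\leq 0$ (a radial monotonicity obtained by a coupling of two copies of $\tilde Y$ on the same Brownian filtration, exploiting the attractive drift). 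Applying It\^o's formula to $\Phi(s,Z(s))$ with $Z(s):=X(s)-x$, and using the Kolmogorov equation to cancel $\partial_s\Phi+\tfrac12\Delta\Phi$, the finite-variation part reduces to
\[
C\,g'(s,\vert Z(s)\vert)\bigl(1+u(s)\cdot\sgn(Z(s))\bigr)\, ds\leq 0,
\]
because $\vert u\vert\leq 1$ forces $1+u\cdot\sgn(Z)\geq 0$ while $g'\leq 0$. Hence $\Phi(\cdot,Z(\cdot))$ is a supermartingale and $P(\vert X(t)-x\vert\leq\epsilon)=E[\Phi(t,Z(t))]\leq\Phi(0,-x)$. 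Reflection symmetry of the $\tilde Y$-law yields $\Phi(0,-x)=\Phi(0,x)$, and the Brownian scaling $\tilde Y^{0,x}(r)=Y^-_{Cx}(rC^2)/C$ identifies this with $P(\vert Y^-_{Cx}(tC^2)\vert\leq C\epsilon)$; dividing by $V_\epsilon$ and passing to the limsup as $\epsilon\to 0$ yields $\rho_t(x)\leq\beta_{d,t,C}(x)$. The lower bound $\rho_t(x)\geq\alpha_{d,t,C}(x)$ follows symmetrically using $Y^+$ with repulsive drift $+C\sgn$, which produces a submartingale. The final inequality $\beta_{d,t,C}(x)\leq\beta_{d,t,C}(0)$ is a direct consequence of the radial monotonicity $g'(0,\cdot)\leq 0$.

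The main obstacle is making the It\^o step rigorous, as $\Phi$ is not a priori $C^{1,2}$: the terminal datum $\mathbf 1_{\vert y\vert\leq\epsilon}$ is discontinuous and the drift $\sgn$ is only measurable. I would circumvent this by mollifying the terminal datum with a smooth kernel $\phi_\delta$, invoking Zvonkin-type regularity for the backward Kolmogorov equation with bounded measurable drift to obtain $C^{1,2}$ approximants $\Phi_\delta$, running the (super/sub)martingale argument at the $\delta$-level, and then letting $\delta\to 0$ and $\epsilon\to 0$ in turn. A secondary technicality is verifying $g'(s,\cdot)\leq 0$ uniformly in $s$, which I would prove by coupling two copies of $\tilde Y$ started from radii $r_1<r_2$ driven by the same Brownian motion and checking that the radial ordering is preserved until the paths coalesce.
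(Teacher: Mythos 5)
Your route is genuinely different from the paper's. You prove the key comparison $P(\vert X(t)-x\vert\le\epsilon)\le P(\vert Y^-_{Cx}(tC^2)\vert\le C\epsilon)$ by a Bene\v{s}-style verification argument: solve the backward Kolmogorov equation for the extremal diffusion, exploit radial symmetry, and show that the value function evaluated along any admissible $X$ is a supermartingale because $1+u\cdot\sgn\ge 0$ and the radial derivative is nonpositive. The paper deliberately avoids this HJB route (it cites Bene\v{s} and says why): it discretises time, solves the control problem exactly by backward induction on each grid (Theorem \ref{t:control problem}), and passes to the limit using tightness and the Jacod--Shiryaev convergence theorem for semimartingale characteristics (Lemmas \ref{l:Pfadabschaetzung}--\ref{l:convergence of the marginals}); the density is then identified via the Lebesgue differentiation theorem, which matches your Girsanov step. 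Your approach, if completed, is shorter and yields the radial monotonicity (hence $\beta_{d,t,C}(x)\le\beta_{d,t,C}(0)$) as a byproduct; the paper's approach trades PDE regularity issues for weak-convergence machinery and never needs smoothness of a value function.

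There is, however, a genuine gap at the point you call a ``secondary technicality'', and it is not secondary: the monotonicity $g'(s,\cdot)\le 0$ is the linchpin of the supermartingale argument, and the synchronous coupling you propose (two copies driven by the same Brownian motion) does not preserve the radial ordering. Already in $d=1$: start the attractive diffusion at $0.1$ and at $0.2$ with the same noise and let the noise drop by roughly $0.6$ over a short interval; the paths land near $-0.5$ and $-0.4$, so the path started closer to the origin is now farther from it. What does work is a reflection coupling --- precisely the argument in the paper's Theorem \ref{t:alphabeta}, which couples $Y_x^-$ and $Y_y^-$ up to the first time $-Y_x^-=Y_y^-$ and uses the strong Markov property and the symmetry of the law thereafter --- or, for $d\ge 2$, a one-dimensional comparison theorem applied to the radial (Bessel-with-constant-drift) equations of Proposition \ref{p:alternative SDE}, driven by the same scalar Brownian motion as in the proof of Theorem \ref{t:multibounds}. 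Your other flagged issue (It\^o for a non-smooth value function) is real but fixable roughly as you indicate: the drift is smooth away from the origin, so after mollifying the terminal datum one has interior Schauder regularity plus global $W^{1,2}_p$ regularity, and the It\^o--Krylov formula (rather than a Zvonkin transform) applies since $X$ is a Brownian motion with bounded drift; one also needs that $X$ spends Lebesgue-null time at the origin, which the paper proves in Lemma \ref{l:An abschaetzung}.
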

\begin{proof}
 See at the end of Section \ref{s:reduction}.
\end{proof}

If $d=1$, then the functions $\alpha$, $\beta$ as well as some of their properties can be derived explicitly, cf.\ Theorem \ref{t:alphabeta}. In the multidimensional case we can give some of their properties. Let us summarise the formulas.
\begin{thm}\label{summarythm}
 Let $t>0$, $C>0$ and $\alpha$, $\beta$ be given as in Theorem \ref{t:main theorem, multi dimensional}. Then
\begin{align*}
 \alpha_{1,t,C}(0) &= \frac{1}{\sqrt{t}}\varphi\left(C\sqrt{t}\right)-C\Phi\left(-C\sqrt{t}\right),\quad\text{and} \\
 \beta_{1,t,C}(0) &= \frac{1}{\sqrt{t}}\varphi\left(C\sqrt{t}\right)+C\Phi\left(C\sqrt{t}\right)
\end{align*}
where $\Phi$ resp.\ $\phi$ denotes the distribution resp.\ density function of the standard normal law. For $x\in\mathbb R\backslash\{0\}$ we have
\begin{align*}
  \alpha_{1,t,C}(x) &=\int_0^{tC^2} C\alpha_{1,tC^2-s,1}(0)\rho_{\theta_0^{Cx}}(s)ds\quad\text{and}\\
  \beta_{1,t,C}(x) &=\int_0^{tC^2}   C\beta_{1,tC^2 -s,1}(0) \rho_{\tau_0^{Cx}}(s)ds
\end{align*}
where 
\begin{align*}
 \rho_{\tau_0^x}(t)   &= \frac{|x|}{\sqrt{2\pi t^3}}e^{-\frac{(|x|-t)^2}{2t}}\quad{and}\\
 \rho_{\theta_0^x}(t) &= \frac{|x|}{\sqrt{2\pi t^3}}e^{-\frac{(|x|+t)^2}{2t}}
\end{align*}
for any $s>0$. Moreover, we have
$$\frac{2^d}{C_d d^{d/2}}\prod_{i=1}^d\alpha_{1,t,C}(x_i) \leq \alpha_{d,t,C}(x)\leq \beta_{d,t,C}(x)\leq \frac{2^d}{C_d}\prod_{i=1}^d\beta_{1,t,C}(x_i), \quad x\in \mathbb{R}^d$$
where $C_d:= \frac{\pi^{d/2}}{\Gamma\left( \frac{d}{2}+1\right)}$ for any $x\in\mathbb R^d$.
\end{thm}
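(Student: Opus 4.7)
My plan is to dispatch the four pieces of the statement separately: the one-dimensional values at $0$, the values at $x\neq 0$ together with the first-passage densities, and the multidimensional product bounds.

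\textbf{One-dimensional values at $0$.} By symmetry of $Y^\pm_0$ I would first observe that $\alpha_{1,t,C}(0)=C\,p^+_T(0)$ and $\beta_{1,t,C}(0)=C\,p^-_T(0)$, where $T:=tC^2$ and $p^\pm_T(0)$ is the Lebesgue density of $Y^\pm_0(T)$ at the origin. Removing the drift via Girsanov turns $Y^\pm_0$ into a standard Brownian motion $B$ under a new measure $Q$, and Tanaka's formula rewrites the stochastic integral appearing in the resulting Radon--Nikodym derivative in terms of $|B(T)|$ and the local time $L^0_T$. On $\{B(T)=0\}$ this collapses to
$$p^\pm_T(0)=\frac{e^{-T/2}}{\sqrt{2\pi T}}\,E\!\left[e^{\mp L^0_T}\mid B(T)=0\right].$$
Lévy's identity $(|B|,L^0)\stackrel{d}{=}(M-B,M)$ combined with the reflection principle produces the joint density of $(|B(T)|,L^0_T)$, whence the conditional law of $L^0_T$ given $B(T)=0$ is $(\ell/T)e^{-\ell^2/(2T)}$; completing the square in the remaining one-dimensional Gaussian integral yields the claimed closed forms in $\varphi$ and $\Phi$.

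\textbf{Values at $x\neq 0$ and the first-passage densities.} For $x\neq 0$ I would apply the strong Markov property at the first zero of $Y^\pm_{Cx}$, namely at $\theta_0^{Cx}$ for $Y^+$ and $\tau_0^{Cx}$ for $Y^-$. Before that time each process is a Brownian motion with constant drift $\pm 1$ killed at $0$; a direct cancellation in the method of images shows that the transition density of the killed process at $0$ vanishes identically. After the restart the law is that of $Y^\pm_0$ started afresh, so the density of $Y^\pm_{Cx}(T)$ at $0$ is exactly the convolution of the first-passage density with $p^\pm_{T-s}(0)$, which rescaled in $(t,C)$ is the stated integral formula. The densities $\rho_{\tau_0^x}$ and $\rho_{\theta_0^x}$ themselves are the classical first-hitting densities of Brownian motion with drift $\pm 1$, obtainable from $\tfrac{a}{\sqrt{2\pi t^3}}e^{-a^2/(2t)}$ via Girsanov.

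\textbf{Multidimensional bounds.} I would use the geometric sandwich $\{|y_i|\le C\epsilon/\sqrt d\text{ for all }i\}\subset B(0,C\epsilon)\subset\{|y_i|\le C\epsilon\text{ for all }i\}$ to bracket $P(|Y^\pm_{Cx}(tC^2)|\le C\epsilon)$ between joint coordinate events. Each coordinate of $Y^\pm_{Cx}$ is a one-dimensional SDE whose drift modulus is at most $1$, so the one-dimensional case of Theorem~\ref{t:main theorem, multi dimensional} bounds the marginal density at $0$ below by $\alpha_{1,t,C}(x_i)$ and above by $\beta_{1,t,C}(x_i)$ after the relevant $(t,C)$-rescaling. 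Dividing by $V_\epsilon=C_d\epsilon^d$ and tracking the contraction $\epsilon\mapsto\epsilon/\sqrt d$ accounts for the prefactors $2^d/C_d$ and $2^d/(C_d d^{d/2})$. The delicate step is precisely here: turning the joint coordinate-cube probabilities into an honest product of marginal probabilities, since the coordinates of $Y^\pm_{Cx}$ are dynamically coupled through their shared denominator $|Y^\pm_{Cx}|$. I expect to resolve this by introducing an auxiliary SDE with independent coordinatewise-optimal drifts and comparing its density at $0$ with that of the radial worst case via the optimality statement of Theorem~\ref{t:main theorem, multi dimensional}.
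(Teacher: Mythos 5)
Your treatment of the one-dimensional statements is sound and essentially matches the paper: the values at $0$ come from the joint law of $(|B(T)|,L^0_T)$ after a Girsanov--Tanaka reduction (the paper outsources this computation to \cite[Exercise 6.3.5]{karatzas.shreve.91}), and the formulas at $x\neq 0$ follow from the strong Markov property at the first zero together with the observation that the killed drifted-Brownian transition density vanishes at the boundary $y=0$ (this is exactly why the explicit first term in Lemmas \ref{l:density minus} and \ref{l:density plus}, which carries the factor $1-e^{-2xy/t}$, drops out of \eqref{alpha} and \eqref{beta}).

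The multidimensional product bounds are where your proposal has a genuine gap, and you have correctly located it but not closed it. The difficulty is that $P\bigl(\bigcap_{i}\{|Y^{-}_{x,i}(t)|\leq\epsilon\}\bigr)$ must be turned into a \emph{product} of one-dimensional probabilities, and neither route you suggest achieves this. The optimality statement of Theorem \ref{t:main theorem, multi dimensional} applied coordinatewise only says that each marginal satisfies $P(|Y^{-}_{x,i}(t)|\leq\epsilon)\leq P(|A_i(t)|\leq\epsilon)$ (since the $i$-th component of the radial drift is adapted and bounded by $1$); this controls $\min_i$ of the marginal probabilities, not their product, because the coordinates of $Y^{-}_x$ are dependent. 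Comparing instead with the auxiliary SDE whose drift is $(-\sgn(y_1),\dots,-\sgn(y_d))$ does not help either: that drift has Euclidean norm $\sqrt{d}$, so the process lies outside the admissible class $\mathcal A$ with bound $1$ and Theorem \ref{t:main theorem, multi dimensional} gives no comparison between its density and $\beta_{d,t}$ (and rescaling $C\mapsto\sqrt d$ would compare against $\beta_{d,t,\sqrt d}$, not against the product of one-dimensional $\beta$'s). What the paper actually does (Theorem \ref{t:multibounds}) is a \emph{pathwise} comparison: writing $Z^{-}_{x,i}=|Y^{-}_{x,i}|^2$, It\^o's formula gives $dZ^{-}_{x,i}=(1-2\sqrt{Z^{-}_{x,i}}\,|Y^{-}_{x,i}|/|Y^{-}_{x}|)dt+2\sqrt{Z^{-}_{x,i}}\,dB_i$ with $B_i=\int\sgn(Y^{-}_{x,i})dW_i$, and since $|Y^{-}_{x,i}|/|Y^{-}_{x}|\leq 1$ the comparison theorem yields $Z^{-}_{x,i}(t)\geq V_i(t)$ where $V_i$ solves $dV_i=(1-2\sqrt{V_i})dt+2\sqrt{V_i}\,dB_i$ driven by the \emph{same} $B_i$. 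Pathwise uniqueness for this equation makes each $V_i$ a strong solution, hence $\sigma(B_i)$-measurable, so the $V_1,\dots,V_d$ are independent and the joint event factorizes: $P(\bigcap_i\{Z^{-}_{x,i}(t)\leq\epsilon^2\})\leq\prod_iP(V_i(t)\leq\epsilon^2)$, with $V_i(t)\stackrel{d}{=}|A_i(t)|^2$. Some such domination-by-an-independent-system argument is indispensable here; without it your sketch of the third inequality (and its mirror image for $\alpha_{d,t,C}$) does not go through.
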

\begin{proof}
 This is part of the statements of Theorems \ref{t:alphabeta} and \ref{t:multibounds} below.
\end{proof}

In what follows, we will derive bounds for the densities of solutions of general SDEs. The following is an immediate consequence of Theorem \ref{t:main theorem, multi dimensional}.
\begin{cor}\label{k:bound for SDE with additive noise}
  Let $C>0$, $x_0\in\mathbb R^d$, $b:\mathbb R_+\times C(\mathbb R_+,\mathbb R^d)\rightarrow \mathbb R$ be predictable and bounded by $C$. Then any weak solution of the SDE 
 $$ X(t) = x_0 + \int_0^t b(s,X) ds + W(t), \quad t\geq 0 $$
  has density $\rho_t$ at time $t>0$ which is bounded from below by $x\mapsto \alpha_{d,t,C}(x-x_0)$ and from above by $x\mapsto \beta_{d,t,C}(x-x_0)$ where $\alpha$ and $\beta$ are given in Theorem \ref{t:main theorem, multi dimensional} and $W$ is a $d$-dimensional Brownian motion. Moreover, the bounds are optimal in the sense that for any $x_1,x_2\in\mathbb R^d$ there are two functionals $b_{x_1}$, resp. $b_{x_2}$ for which the density $\rho_t$ of the solution to the SDE $dX(t)=b_{x_1}(X(t))dt + W(t), X(0)=0$, resp.\ $dX(t)=b_{x_2}(X(t))dt + W(t), X(0)=0$ attains the upper bound in $x_1$, resp. the lower bound in $x_2$.
\end{cor}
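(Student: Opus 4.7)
The plan is to split the assertion into the two-sided bound and the optimality statement, and to reduce each to Theorem~\ref{t:main theorem, multi dimensional}.

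For the bounds, let $X$ be any weak solution on a filtered probability space carrying a $d$-dimensional Brownian motion $W$, and set $u(s):=b(s,X)/C$. Predictability of $b$ together with $|b|\le C$ gives $u\in\mathcal A$, and the translated process
\[ \tilde X(t):=X(t)-x_0=\int_0^t Cu(s)\,ds+W(t) \]
is exactly of the form covered by Theorem~\ref{t:main theorem, multi dimensional}. That theorem produces a Lebesgue density $\tilde\rho_t$ of $\tilde X(t)$ satisfying $\alpha_{d,t,C}(y)\le\tilde\rho_t(y)\le\beta_{d,t,C}(y)$; translating back, $X(t)$ has density $\rho_t(x)=\tilde\rho_t(x-x_0)$, which gives the claimed two-sided bound.

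For optimality I would take the worst-case drifts $b_{x_1}(y):=-C\sgn(y-x_1)$ and $b_{x_2}(y):=+C\sgn(y-x_2)$; both are bounded and Borel measurable, so Girsanov's theorem produces weak solutions $X^{(1)}$, $X^{(2)}$ starting at $0$. A Brownian scaling then identifies $X^{(1)}$ with the worst-case process of Theorem~\ref{t:main theorem, multi dimensional}: the pair $Y(s):=C(X^{(1)}(s/C^2)-x_1)$ and $\tilde W(s):=CW(s/C^2)$ satisfies, by a short It\^o computation using $\sgn(y/C)=\sgn(y)$,
\[ dY(s)=-\sgn(Y(s))\,ds+d\tilde W(s),\quad Y(0)=-Cx_1, \]
with $\tilde W$ Brownian. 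Weak uniqueness of $Y^-$ thus yields $Y\stackrel{d}{=}Y^-_{-Cx_1}$, and since $|X^{(1)}(t)-x_1|\le\epsilon$ is equivalent to $|Y(tC^2)|\le C\epsilon$, the density of $X^{(1)}(t)$ at $x_1$ equals $\beta_{d,t,C}(-x_1)$. A symmetry argument --- the pair $(-Y^-_x,-W)$ solves the $Y^-$ SDE with initial datum $-x$ --- shows $|Y^-_x|\stackrel{d}{=}|Y^-_{-x}|$, hence $\beta_{d,t,C}(-x_1)=\beta_{d,t,C}(x_1)$. The argument for $b_{x_2}$ and $\alpha$ is entirely parallel, replacing $Y^-$ by $Y^+$.

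The main obstacle is making the scaling step rigorous in arbitrary dimension: one must verify that the rescaled driving process $\tilde W$ is genuinely Brownian, that the discontinuity of $\sgn$ causes no trouble in the change of time (it enters only multiplicatively against $ds$), and then invoke weak uniqueness of $Y^\pm$ from the main theorem to identify laws. The remaining ingredients --- existence of the density via the main theorem, Girsanov's construction of weak solutions for bounded measurable drift, and the reflection symmetry of the $Y^\pm$ equations --- are by now routine.
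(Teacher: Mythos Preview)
Your argument is correct and follows the same line as the paper's proof: translate to reduce to Theorem~\ref{t:main theorem, multi dimensional}, then exhibit the extremal drifts as shifted versions of $\pm C\,\sgn$. The paper's proof is much terser---it simply says ``shifts of the processes $Y^-$, resp.\ $Y^+$ attain the upper, resp.\ lower bounds''---whereas you spell out the Brownian scaling $Y(s)=C(X^{(1)}(s/C^2)-x_1)$, invoke weak uniqueness to identify $Y$ with $Y^-_{-Cx_1}$, and then use the reflection symmetry $-Y^-_x\stackrel{d}{=}Y^-_{-x}$ to pass from $\beta_{d,t,C}(-x_1)$ to $\beta_{d,t,C}(x_1)$. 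These are exactly the ingredients the paper uses elsewhere (the scaling appears in the proof of Theorem~\ref{t:main theorem, multi dimensional}, and the reflection/orthogonal-invariance is used both there and in Lemma~\ref{l:density minus}), so your elaboration is faithful to the intended argument rather than a genuinely different route.
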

\begin{proof}
 Define $Y(t) := X(t) - x_0$ and $u(t) := b(t,X)$ for any $t\geq 0$. Then
 $$ Y(t) = \int_0^t u(s) ds + W(t),\quad t\geq 0.$$
 The bounds follow from Theorem \ref{t:main theorem, multi dimensional}. Shifts of the processes $Y^-$, resp.\ $Y^+$ attain the upper, resp.\ lower bounds at the given points.
\end{proof}

Now we focus on our second main result which is an application of Corollary \ref{k:bound for SDE with additive noise}. This time $X$ is given as a solution of an SDE with measurable drift and a diffusion coefficient which is continuously differentiable.
\begin{thm}\label{t:generelle SDE}
 Let $b:\mathbb R_+\times C(\mathbb R_+,\mathbb R^d)\rightarrow \mathbb R^d$ be predictable, $\sigma:\mathbb R_+\times \mathbb R^d\rightarrow \mathbb R^{d\times d}$ be continuously differentiable and assume the following conditions.
 \begin{enumerate}
   \item $\sigma(t,x)$ is an invertible matrix for any $t\geq0$, $x\in\mathbb R^d$.
   \item There is a function $F:\mathbb R_+\times\mathbb R^d\rightarrow \mathbb R^d$ such that $D_2F(t,x) =(\sigma(t,x))^{-1}$ for any $t\geq 0$, $x\in\mathbb R^d$ where $D_2F(t,x)$ denotes the Fr\'echet derivative of $F(t,\cdot)$ with respect to $x$.
   \item The function
  \begin{align*}
   \widetilde b&:\mathbb R_+\times C(\mathbb R_+,\mathbb R^d)\rightarrow \mathbb R^d,\\
   (t,f)&\mapsto \partial_1F(t,f(t)) + \sigma(t,f(t))^{-1}b(t,f) \\
      &\quad+\frac{1}{2}\left(\tr\big(\ \sigma(t,f(t))^\top H_2F_k(t,f(t))\sigma(t,f(t))\ \big)\right)_{k=1,\dots,d}
  \end{align*}
   is bounded by some constant $C>0$ where $H_2F_k(t,x)$ denotes the Hessian matrix of $F_k(t,\cdot)$, i.e.\ $(\partial_{x_i}\partial_{x_j}F_k(t,x))_{i,j=1,\dots,d}$ for any $t\geq 0$, $x\in\mathbb R^d$.
 \end{enumerate}
  Then any solution of the SDE
  $$ X(t) = x_0 + \int_0^t b(s,X)ds + \int_0^t \sigma(s,X(s))dW(s)$$
  has, at each time $t$, Lebegsue density $\rho_t$ and for every $x\in\mathbb R^d$ we have
  $$ \rho_t(x) \leq \frac{\beta_{d,t,C}(F(t,x)-F(0,x_0))}{\tr(\sigma(t,x))} $$
  where $\alpha_{d,t,C}$, $\beta_{d,t,C}$ are defined as in Theorem \ref{t:main theorem, multi dimensional}. Moreover, if additionally $F(t,\cdot)$ is invertible for any fixed $t>0$, then
  $$ 0<\frac{\alpha_{d,t,C}(F(t,x)-F(0,x_0))}{\tr(\sigma(t,x))} \leq \rho_t(x) \leq \frac{\beta_{d,t,C}(F(t,x)-F(0,x_0))}{\tr(\sigma(t,x))}. $$
\end{thm}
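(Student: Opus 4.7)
My plan is to exploit hypothesis (2) to transform the given SDE into one with additive Brownian noise via the pathwise map $F(t,\cdot)$, invoke the already-proved Corollary \ref{k:bound for SDE with additive noise} to bound the density of the transformed process, and finally push those bounds back to $X$ through a change-of-variables argument.

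Concretely, set $Y(t):=F(t,X(t))$ and apply the multidimensional It\^o formula componentwise. Using $D_2F(t,x)=\sigma(t,x)^{-1}$ from hypothesis (2), the stochastic term $D_2F(t,X(t))\,\sigma(t,X(t))\,dW(t)$ collapses to $dW(t)$, the second-order correction in coordinate $k$ is exactly $\tfrac{1}{2}\tr\bigl(\sigma^\top H_2F_k\sigma\bigr)(t,X(t))\,dt$, and the remaining drift terms aggregate to $\widetilde b(t,X)$ as defined in hypothesis (3). Hence
\[
  Y(t)=F(0,x_0)+\int_0^t\widetilde b(s,X)\,ds+W(t).
\]
Since $\widetilde b(\cdot,X)$ is $\F$-adapted (because $X$ is) and bounded by $C$, Corollary \ref{k:bound for SDE with additive noise} applies and provides a Lebesgue density $\rho_{Y(t)}$ of $Y(t)$ satisfying
\[
  \alpha_{d,t,C}(y-F(0,x_0)) \le \rho_{Y(t)}(y) \le \beta_{d,t,C}(y-F(0,x_0)).
\]

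It remains to transfer these bounds to $X(t)$. Invertibility of $\sigma(t,\cdot)$ combined with $D_2F=\sigma^{-1}$ ensures by the inverse function theorem that $F(t,\cdot)$ is a local $C^1$-diffeomorphism; in particular it sends Lebesgue-null sets to Lebesgue-null sets, so absolute continuity of the law of $Y(t)$ transfers to that of $X(t)$. For the upper bound, pick a ball $B_\epsilon(x)$ small enough that $F(t,\cdot)$ is injective on it; then $\{X(t)\in B_\epsilon(x)\}\subseteq\{Y(t)\in F(t,B_\epsilon(x))\}$. Integrating the upper bound on $\rho_{Y(t)}$ over $F(t,B_\epsilon(x))$, changing variables $y=F(t,x')$, dividing by $V_\epsilon$, and invoking Lebesgue differentiation as $\epsilon\to 0$ yields the claimed pointwise upper bound. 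Under the additional hypothesis that $F(t,\cdot)$ is globally invertible, the set inclusion above becomes an equality, so running the same argument with the lower bound on $\rho_{Y(t)}$ produces the matching pointwise lower bound.

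The It\^o transformation and the invocation of Corollary \ref{k:bound for SDE with additive noise} are essentially routine once the grouping of drift terms in hypothesis (3) is recognised as the natural It\^o-corrected drift. I expect the main obstacle to lie in the change-of-variables step: making the pushforward rigorous when $F(t,\cdot)$ is only locally invertible (which is enough for the upper bound), justifying the passage from an integrated inequality to the $\limsup$-pointwise density introduced in Theorem \ref{t:main theorem, multi dimensional}, and correctly identifying the Jacobian factor that will appear in the denominator on the right-hand side.
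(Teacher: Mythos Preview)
Your approach matches the paper's exactly: set $Y(t)=F(t,X(t))$, apply It\^o to reduce to additive noise, invoke the density bound, and transfer back by change of variables. One technical correction: the drift of $Y$ is $\widetilde b(\cdot,X)$, an adapted bounded process but \emph{not} a path functional of $Y$ itself, so Corollary~\ref{k:bound for SDE with additive noise} does not apply as stated; you should cite Theorem~\ref{t:main theorem, multi dimensional} directly (as the paper does), which only requires $u\in\mathcal A$. Your caution about the Jacobian factor is well placed---the change-of-variables formula yields $|\det D_2F(t,x)|=|\det\sigma(t,x)|^{-1}$, and you should compare this carefully with the $\tr(\sigma(t,x))$ appearing in the denominator of the statement.
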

\begin{proof}
 Define $Y(t):= F(t,X(t))$ and $u(t):= \widetilde b(t,X)$ for any $t\geq 0$. Then It\^o's formula yields
 $$ Y(t) = F(0,x_0) + \int_0^t u(s) ds + W(t),\quad t\geq 0. $$
 Theorem \ref{t:main theorem, multi dimensional} states that $Y(t)$ has Lebesgue density $\rho_{Y(t)}$ which admits the bounds
 $$ \alpha_{d,t,C}(y-F(0,x_0)) \leq \rho_{Y(t)}(y) \leq \beta_{d,t,C}(y-F(0,x_0))$$
 for any $t> 0$, $y\in\mathbb R^d$.

 From the definition of $Y(t)$ we directly get
 $$ \rho_{t}(x) \leq \frac{\rho_{Y(t)}(F(t,x)-F(0,x_0))}{\tr(\sigma(t,x))}\leq \frac{\beta_{d,t,C}(F(t,x)-F(0,x_0))}{\tr(\sigma(t,x))} $$
 for any $t>0$, $x\in\mathbb R^d$.

 If we assume that $F(t,\cdot)$ is invertible for any $t>0$, then
 $$ \rho_{t}(x) = \frac{\rho_{Y(t)}(F(t,x)-F(0,x_0))}{\tr(\sigma(t,x))} $$
 for any $x\in\mathbb R^d$ and, hence, the additional claim follows.
\end{proof}

The conditions (1) to (3) appearing in Theorem \ref{t:generelle SDE} simplify considerably in dimension $1$. Moreover, due to It\^o-Tanaka's formula we can relax the conditions on $\sigma$.
\begin{thm}
 Let $X$ be a solution of the SDE 
  $$X(t) = x_0+ \int_0^tb(s,X)dt + \int_0^t\sigma(X(s))dW(s)$$
 where $x_0\in\mathbb R$, $W$ is a standard Brownian motion, $b:\mathbb R_+\times C(\mathbb R_+,\mathbb R)\rightarrow \mathbb R$ predictable and bounded by some constant $C_b$, $\sigma:\mathbb R\rightarrow\mathbb R_{+}$ is a Lipschitz continuous function with Lipschitz bound $L$ and $\sigma(x)\geq \epsilon$ for some constant $\epsilon>0$.

 Then $X(t)$ has Lebesgue density $\rho_t$ and
  $$0<\frac{\alpha_{t,C}(\vert F(x)-F(x_0)\vert)}{\sigma(x)}\leq \rho_t(x) \leq \frac{\beta_{t,C}(\vert F(x)-F(x_0)\vert)}{\sigma(x)}$$
 for any $t>0$ where $\alpha_{t,C}$ and $\beta_{t,C}$ are defined as in Theorem \ref{t:main theorem, multi dimensional} when $d=1$, $F(x):=\int_0^x \frac{1}{\sigma(u)}du$ and 
$$C:=\sup\left\{\left\vert\frac{b(t,f)}{\sigma(f(t))}\right\vert:t\in\mathbb R_+,f\in C(\mathbb R_+,\mathbb R)\right\}+L/2.$$
 Moreover, $C\leq \frac{C_b}{\epsilon}+L/2$ where $C_b$ is a uniform bound for $b$.
\end{thm}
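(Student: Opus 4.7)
The plan is to transform away the diffusion coefficient via the Lamperti change of variables, reducing the problem to an additive-noise SDE with bounded drift, and then invoke Corollary \ref{k:bound for SDE with additive noise} exactly as in the proof of Theorem \ref{t:generelle SDE}. The only substantial difference is that here $\sigma$ is merely Lipschitz, so classical It\^o is unavailable and we must use It\^o--Tanaka (or the generalised It\^o formula for $C^1$ functions with absolutely continuous derivative).

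Concretely, set $Y(t):=F(X(t))$ with $F(x)=\int_0^x du/\sigma(u)$. Since $\sigma\geq\epsilon>0$ is $L$-Lipschitz, $F$ is a strictly increasing $C^1$ diffeomorphism from $\mathbb R$ onto $F(\mathbb R)$ whose derivative $F'(x)=1/\sigma(x)$ is itself Lipschitz, hence absolutely continuous, with $F''(x)=-\sigma'(x)/\sigma(x)^2$ in the almost-everywhere (Rademacher) sense. The generalised It\^o formula then yields
$$Y(t)=F(x_0)+\int_0^t\!\Big[\tfrac{b(s,X)}{\sigma(X(s))}-\tfrac{1}{2}\sigma'(X(s))\Big]\,ds+W(t),$$
and the correction integral is unambiguous because the occupation time formula together with $d\langle X\rangle_s=\sigma(X(s))^2 ds\geq\epsilon^2\,ds$ shows that altering $\sigma'$ on any Lebesgue-null set does not change $\int_0^t\sigma'(X(s))\,ds$.

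Writing $u(s)$ for the bracket, $u$ is $\mathcal F$-adapted and pointwise bounded by $\sup\vert b/\sigma\vert+L/2=C\leq C_b/\epsilon+L/2$. Corollary \ref{k:bound for SDE with additive noise} applied to $Y$ gives a density $\rho_{Y(t)}$ with $\alpha_{1,t,C}(y-F(x_0))\leq\rho_{Y(t)}(y)\leq\beta_{1,t,C}(y-F(x_0))$. Since $F$ is a $C^1$ diffeomorphism with $F'(x)=1/\sigma(x)$, the change-of-variables formula yields $\rho_t(x)=\rho_{Y(t)}(F(x))/\sigma(x)$, and the stated two-sided bound follows. The absolute value in the argument is justified by the symmetry $\alpha_{1,t,C}(y)=\alpha_{1,t,C}(-y)$ (and likewise for $\beta_{1,t,C}$), which is a direct consequence of the reflection $(W,x)\mapsto(-W,-x)$ applied to the defining SDEs of $Y^{\pm}$, using that $\sgn$ is odd.

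The only real obstacle is making the It\^o--Tanaka step rigorous when $\sigma$ is only Lipschitz. A clean way is mollification: pick $\sigma_n\in C^{\infty}$ with $\sigma_n\to\sigma$ locally uniformly, $\sigma_n\geq\epsilon/2$, $\Vert\sigma_n'\Vert_\infty\leq L+1/n$ and $\sigma_n'\to\sigma'$ almost everywhere; apply classical It\^o to $F_n(X)$, and pass to the limit, using the occupation time formula to identify the limit of $\int_0^t\sigma_n'(X(s))\,ds$ as $\int_0^t\sigma'(X(s))\,ds$. Everything else is dominated-convergence bookkeeping, which does not affect the argument above.
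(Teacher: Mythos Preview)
Your proposal is correct and follows essentially the same route as the paper: define the Lamperti transform $Y(t)=F(X(t))$, apply the It\^o--Tanaka formula to obtain an additive-noise equation with drift bounded by $C$, and then invoke Corollary~\ref{k:bound for SDE with additive noise}. The paper is slightly more explicit in one cosmetic respect---it rewrites the drift as a predictable functional $\widetilde b(s,f)$ of the \emph{path of $Y$} via $G=F^{-1}$, so that Corollary~\ref{k:bound for SDE with additive noise} applies verbatim---whereas you appeal to adaptedness of $u$ directly (which is fine, since Theorem~\ref{t:main theorem, multi dimensional} only needs $u\in\mathcal A$); conversely, your discussion of the mollification argument for the It\^o--Tanaka step and of the symmetry justifying the absolute value adds detail the paper leaves implicit.
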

\begin{proof}
 Define $Y(t):=F(X(t))$. Since $\sigma$ is Lipschitz continuous there is a function $\sigma':\mathbb R_+\rightarrow\mathbb R$ which is bounded by $L$ and $\sigma(x) = \sigma(0) + \int_0^x \sigma'(u)du$. Then It\^o-Tanaka's formula \cite[Theorem VI.1.5]{revuz.yor.99} yields
 $$ Y(t) = F(x_0) + \int_0^t\left(\frac{b(s,X)}{\sigma(X(s))}-\frac{1}{2}\sigma'(X(s))\right)ds + W(t).$$
 Let $G:=F^{-1}$ and define
 $$\widetilde b(s,y) := \frac{b(s,G\circ f)}{\sigma(G(f(s))} - \frac{1}{2}\sigma'(G(f(s))),\quad s\in\mathbb R_+,f\in C(\mathbb R_+,\mathbb R)$$
 which is predictable and bounded by $C$. Then the result follows from Corollary \ref{k:bound for SDE with additive noise}.
\end{proof}

In the next section we will give precise definitions and mathematical computations of the functions $\alpha_{d,t,C}$ and $\beta_{d,t,C}$ in dimension 1 and why these are the optimal bounds (in the sense of Corollary \ref{k:bound for SDE with additive noise}) for the densities of SDEs with bounded measurable drifts. Before we do that, let us give some intuitive insight on the shape and behaviour of these bounds for the one-dimensional case. Consider any one-dimensional process of the form 
$$X(t) = \int_0^t u(s)ds + W(t),\quad  t\geq 0, \quad u\in \mathcal{A}$$ 
as in Theorem \ref{t:main theorem, multi dimensional}. In particular, $X$ can be the solution to the following SDE, $dX(t) = b(t,X)dt+dW(t)$, $X(0)=0$, $t\geq 0$, with $b$ bounded and predictable as in Corollary \ref{k:bound for SDE with additive noise}. Furthermore, denote by $\rho_t$ the density of $X(t)$ at a fixed time $t>0$. Then Theorem \ref{t:main theorem, multi dimensional} grants that $0<\alpha_t(x) \leq \rho_t(x) \leq \beta_t(x)$ for any $x\in \mathbb{R}$. In the following figure we can observe the functions $\alpha_t$ and $\beta_t$ for different values of $t>0$ and see how they behave. We can see the function $\alpha_t$ in orange and $\beta_t$ in green. Any density lies between these two curves and these bounds are optimal in the sense that, for given $x_0,y_0\in \mathbb{R}$ we can find drifts $u_{x_0}$ and $u_{y_0}$ such that the associated densities $\rho_t^{x_0}$, resp. $\rho_t^{y_0}$ for these drift coefficients satisfy $\rho_t(x_0)=\alpha_t(x_0)$, respectively, $\rho_t(y_0)=\beta_t(y_0)$. As an illustration we just take the drift to be $+\sgn(x-0.25)$ in blue and $-\sgn(x-1)$ in red.

 \begin{figure}[H]
 \centering
 \includegraphics[scale=0.5, angle=270]{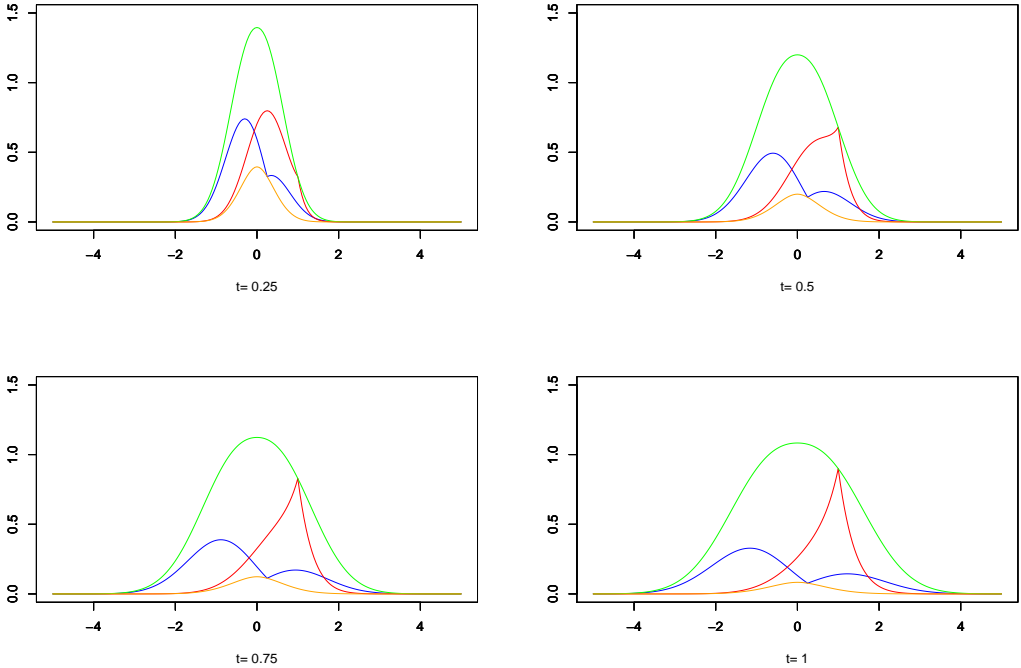}
 \caption{Upper and lower bounds for $C=1$ starting at $x=0$ (in green and orange) with the respective densities when the drift coefficients are $\sgn(x-0.25)$ and $-\sgn(x-1)$ (blue and red) at different times \mbox{$t\in\{0.25, 0.5, 0.75, 1\}$}.}
 \end{figure}

As we can see, both densities are bounded by $\alpha_t$ and $\beta_t$ and the bounds are attained in $0.25$ for density of the process with drift $+\sgn(x-0.25)$ (in blue) and in $1$ when the drift is $-\sgn(x-1)$ (in red).

\section{Reduction and the critical case}\label{s:reduction}

In this section we will see how to derive the functions $\alpha_{t,C}$ and $\beta_{t,C}$ explicitly for the case $d=1$ as well as some of their properties, cf.\ Theorem \ref{t:alphabeta}. Then we will show that these are indeed the bounds for the densities of any solution to SDEs with bounded measurable drift by solving a stochastic control problem, cf.\ Theorem \ref{t:control problem} and thereafter we give the proof for Theorem \ref{t:main theorem, multi dimensional}. In the sequel, consider the process

\begin{align}\label{e:signum SDE}
Y_x^{\pm}(t) := x \pm \int_0^t \sgn (Y_x^{\pm}(s))ds + W(t), \quad t\geq 0,
\end{align}
c.f.\ \cite{veretennikov.79} for existence and (pathwise) uniqueness. Moreover, at some point we will also use the property that the solution to equation (\ref{e:signum SDE}) is strong Markov, even for the multidimensional case. This can be for instance justified using \cite[Theorem 6.4.5]{applebaum.09} in connection with \cite[Corollary IX.1.14]{revuz.yor.99}.

% \?{Intro, explain, reduction to one-dimensional case, blabla}

\begin{lem}\label{0:densities}
For every $t>0$, $Y_0^{+}(t)$ resp.\ $Y_0^{-}(t)$ has density $\rho_{Y_0^{+}(t)}$, resp.\ $\rho_{Y_0^{-}(t)}$ given by
 \begin{align*}
p_{t}(0,y) &:= \rho_{Y_0^{+}(t)} = \frac{1}{\sqrt{t}}\phi\left(\frac{\vert y\vert -t}{\sqrt{t}}\right)-e^{2|y|}\Phi\left(-\frac{\vert y\vert + t}{\sqrt{t}}\right),\quad\text{resp.} \\
 q_{t}(0,y) &:= \rho_{Y_0^{-}(t)} = \frac{1}{\sqrt{t}}\phi\left(\frac{t+\vert y\vert}{\sqrt{t}}\right)+e^{-2\vert y\vert}\Phi\left(\frac{t-\vert y\vert}{\sqrt{t}}\right)
 \end{align*}
 for $y\in\mathbb R$ and any $t>0$ where $\phi$, resp.\ $\Phi$, denote the density, resp.\ the distribution function, of the standard normal law.
\end{lem}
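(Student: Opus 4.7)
The plan is to use Girsanov's theorem to convert $Y_0^\pm$ into a standard Brownian motion, express the Radon--Nikodym derivative explicitly in terms of $|Y_0^\pm(t)|$ and the local time at $0$ via Tanaka's formula, and integrate against the classical joint law of $(W(t),L_t^0)$ to read off the density.

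Since the drift $\pm\sgn(\cdot)$ is bounded by $1$, Novikov's condition holds trivially, so $Z_t^\pm := \exp\bigl(\mp\int_0^t\sgn(Y_0^\pm(s))\,dW(s) - t/2\bigr)$ is a true martingale. Defining $Q^\pm$ by $dQ^\pm/dP\bigl|_{\mathcal F_t} = Z_t^\pm$, Girsanov's theorem implies that $Y_0^\pm$ is a standard Brownian motion under $Q^\pm$. Tanaka's formula $d|Y_0^\pm(t)| = \sgn(Y_0^\pm(t))\,dY_0^\pm(t) + dL_t^0$, combined with the SDE and the fact that $\{s : Y_0^\pm(s)=0\}$ has Lebesgue measure zero almost surely, yields $\int_0^t \sgn(Y_0^\pm(s))\,dW(s) = |Y_0^\pm(t)| \mp t - L_t^0$, whence
$$\frac{dP}{dQ^\pm}\bigg|_{\mathcal F_t} = \exp\bigl(\pm(|Y_0^\pm(t)|-L_t^0) - t/2\bigr).$$

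Inserting the classical joint density
$$P(W(t)\in dw,\,L_t^0\in dl) = \frac{l+|w|}{\sqrt{2\pi t^3}}\exp\bigl(-(l+|w|)^2/(2t)\bigr)\,dl\,dw,\qquad w\in\mathbb R,\ l\geq 0,$$
(obtained from L\'evy's identity $(|W(t)|,L_t^0)\stackrel{d}{=}(M_t-W(t),M_t)$ together with the reflection principle for the running maximum $M_t$) into $E_P[f(Y_0^\pm(t))] = E_{Q^\pm}[f(Y_0^\pm(t))\,dP/dQ^\pm]$ identifies the density as
$$\rho_{Y_0^\pm(t)}(y) = e^{\pm|y|-t/2}\int_0^\infty e^{\mp l}\,\frac{l+|y|}{\sqrt{2\pi t^3}}\exp\bigl(-(l+|y|)^2/(2t)\bigr)\,dl.$$
The remaining evaluation is routine bookkeeping: the substitution $u := l+|y|$ absorbs the $e^{\mp l}$ factor, completing the square in the exponent as $-(u\pm t)^2/(2t) + t/2$ cancels the prefactor $e^{-t/2}$, and the shift $v := u\pm t$ splits the integrand into a piece proportional to $v\,e^{-v^2/(2t)}$ (producing a $\phi$ term) and one proportional to $e^{-v^2/(2t)}$ (producing a $\Phi$ term). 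The algebraic identity $e^{\pm 2|y|}\phi((|y|\pm t)/\sqrt t) = \phi((|y|\mp t)/\sqrt t)$ then rewrites the result in the form asserted in the lemma. I expect the main difficulty to be purely notational: keeping the $\pm/\mp$ conventions consistent across Girsanov, Tanaka, and the change-of-variable steps, rather than anything conceptually deep.
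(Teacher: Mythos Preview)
Your argument is correct and is precisely the approach the paper has in mind: the authors simply cite \cite[Exercise 6.3.5]{karatzas.shreve.91} for $Y_0^-$ and note that the $Y_0^+$ case is analogous, and in the sentence following the lemma they explain that the computation rests on exactly the ingredients you use, namely the explicit joint law of $(W(t),L_t^0)$ together with the Girsanov/Tanaka reduction. Your write-up simply spells out what that exercise asks the reader to do.
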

\begin{proof}
 The density for $Y_0^{-}(t)$ is the statement of \cite[Exercise 6.3.5]{karatzas.shreve.91} as for $Y_0^{+}(t)$ computations are fairly similar.
\end{proof}

The computation of the densities $\rho_{Y_0^{+}(t)}$ and $\rho_{Y_0^{-}(t)}$ in the previous lemma are relatively easy given the fact that the local-time of the Brownian motion starting from 0 is symmetric and the joint law of $W(t)$ and the local time of $W$, $L_t^W (0)$ is explicitly known, see \cite{karatzas.shreve.91}. Nevertheless, one is able to find reasonably explicit expressions for the densities of $Y_x^{+}(t)$ and $Y_x^{-}(t)$ which yield representations for $\alpha$ and $\beta$ if $d=1$.

First we focus on the computation of the density of $Y_x^{-}(t)$ and then for $Y_x^{+}(t)$ which is similar.

\begin{lem}\label{l:density minus}
For every $t\geq 0$, the density of $Y_x^{-}(t)$ is given by
$$q_{t}(x,y) =\frac{1}{\sqrt{2\pi t}}e^{-\frac{(\sgn(x)(x-y)-t)^2}{2t}}\left( 1- e^{-\frac{2xy}{t}}\right)1_{\{\sgn (xy)\geq 0\}} + \int_0^t   q_{t-s}(0,y) \rho_{\tau_0^x}(s)ds$$
where $x,y\in \mathbb{R}$, $x\neq 0$ and $\tau_0^x$ is the first hitting time of the process $Y_x^{-}(t)$ at 0 whose density function is explicitly given by
$$\rho_{\tau_0^x}(s) = \frac{|x|}{\sqrt{2\pi s^3}}e^{-\frac{(|x|-s)^2}{2s}}, \ s>0.$$
\end{lem}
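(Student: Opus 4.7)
The plan is to split the distribution of $Y_x^{-}(t)$ according to whether the hitting time $\tau_0^x$ exceeds $t$ or not, using the strong Markov property of the SDE at $\tau_0^x$. Assume $x>0$ (the case $x<0$ follows by symmetry, reflecting the equation through the origin). For $s<\tau_0^x$ the path $Y_x^{-}(s)$ stays strictly positive, so $\sgn(Y_x^{-}(s))=1$ and the SDE reduces on $[0,\tau_0^x)$ to
$$ Y_x^{-}(s) = x - s + W(s), $$
i.e.\ Brownian motion with drift $-1$ started at $x$. Write $B(s):=x-s+W(s)$ and let $\tau:=\inf\{s\geq 0:B(s)=0\}$, so that on $\{\tau_0^x>t\}$ we have $Y_x^{-}(t)=B(t)$ and $\tau_0^x=\tau$.

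For the first piece I would compute, via a Girsanov change of measure removing the drift, the sub-probability kernel of Brownian motion with drift killed at $0$. Under the measure change with density $\exp(W(t)+t/2)$, the process $B$ becomes a standard Brownian motion started at $x$, for which the reflection principle gives
$$ P(B(t)\in dy,\tau>t) = \frac{1}{\sqrt{2\pi t}}\bigl(e^{-(y-x)^2/(2t)}-e^{-(y+x)^2/(2t)}\bigr)\,dy,\quad y>0 $$
under the new measure. Reverting the measure change and simplifying the exponentials (rewriting $-2x-(y+x+t)^2/(2t)$ as $-(x-y-t)^2/(2t)-2xy/t$, which is the identity $(x-y-t)^2+4xy=(y+x+t)^2-4xt$) yields
$$ P(Y_x^{-}(t)\in dy,\tau_0^x>t) = \frac{1}{\sqrt{2\pi t}}e^{-(x-y-t)^2/(2t)}\bigl(1-e^{-2xy/t}\bigr)\,dy $$
for $y>0$, and the same computation gives $0$ for $y\le 0$, matching the first term of the claimed formula together with the indicator $1_{\sgn(xy)\ge 0}$. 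The density $\rho_{\tau_0^x}$ of the hitting time $\tau_0^x=\tau$ is the standard inverse Gaussian obtained by differentiating $P(\tau\le s)=P(\min_{u\le s}(x-u+W(u))\le 0)$, again via Girsanov plus the reflection principle for the running minimum of drift-free Brownian motion; this gives the stated formula $\rho_{\tau_0^x}(s)=|x|(2\pi s^3)^{-1/2}\exp(-(|x|-s)^2/(2s))$.

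For the second piece I would invoke the strong Markov property at $\tau_0^x$: conditional on $\tau_0^x=s\le t$, the law of $(Y_x^{-}(s+u))_{u\ge 0}$ agrees with the law of $(Y_0^{-}(u))_{u\ge 0}$ (using pathwise uniqueness for equation \eqref{e:signum SDE} and the fact that $Y_x^{-}(\tau_0^x)=0$). Hence, applying Lemma \ref{0:densities},
$$ P(Y_x^{-}(t)\in dy,\tau_0^x\le t) = \Bigl(\int_0^t q_{t-s}(0,y)\,\rho_{\tau_0^x}(s)\,ds\Bigr)\,dy, $$
which yields the second summand. Adding the two pieces and using that $\{\tau_0^x>t\}$ and $\{\tau_0^x\le t\}$ partition $\Omega$ gives the claimed expression for $q_t(x,y)$.

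The main obstacle, beyond bookkeeping the signs in the case $x<0$, is the algebraic reshuffling to recognise the Girsanov--reflection output as the compact form $(1-e^{-2xy/t})e^{-(\sgn(x)(x-y)-t)^2/(2t)}/\sqrt{2\pi t}$; everything else is a clean application of strong Markov together with standard one-dimensional hitting-time computations.
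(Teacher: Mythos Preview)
Your approach is essentially the same as the paper's: split on the hitting time $\tau_0^x$, handle the pre-hitting piece by Girsanov plus a reflection-type computation (the paper phrases this via the joint density of Brownian motion and its running maximum, you via the killed-Brownian-motion kernel, which is equivalent), handle the post-hitting piece by the strong Markov property and Lemma~\ref{0:densities}, and treat $x<0$ by the symmetry $-Y_{-x}^-\stackrel{d}{=}Y_x^-$. One small slip: the Girsanov density removing the drift $-1$ from $B(s)=x-s+W(s)$ is $\exp(W(t)-t/2)$, not $\exp(W(t)+t/2)$; with this correction your algebra goes through and matches the paper's formula.
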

\begin{proof}
Let $\tau_0^x$ be the first time the process $Y_x^{-}$ hits 0, i.e.
$$\tau_0^x := \inf \{t\geq 0: \ Y_x^{-}(t) = 0\}.$$
Then it is clear, that $Y_x^{-}(t) = x - \sgn (x)t+W(t)$ for any $t\in[0,\tau_0^x]$. Define $\widetilde{W}:=-W$ and $B(t):= \sgn(x)t + \widetilde{W}(t)$. The process $B(t)$ is a Brownian motion with drift starting at 0. It is clear, that $\tau_0^x =\inf \{t\geq 0: B(t) =x\}$, whose law is known, namely $\tau_0^x$ is inverse Gaussian distributed and \cite[p.223, Formula 2.0.2]{borodin.salminen.96} states that its density is given by
$$\rho_{\tau_0^x}(t) = \frac{|x|}{\sqrt{2\pi t^3}}e^{-\frac{(|x|-t)^2}{2t}}, \ t>0.$$

Now define $f_{\varepsilon}(z) := \frac{1}{2\varepsilon} 1_{(y-\varepsilon, y+\varepsilon)}(z)$ for a fixed $y\in \mathbb{R}$, then
\begin{align*}
\E[f_{\varepsilon} (Y_x(t))] &= \E[f_{\varepsilon} (Y_x^{-}(t))1_{\{t<\tau_0^x\}}] + \E[f_{\varepsilon} (Y_x^{-}(t))1_{\{t\geq\tau_0^x\}}]\\
&= A_1+A_2
\end{align*}
where $A_1:=\E[f_{\varepsilon} (Y_x^{-}(t))1_{\{t<\tau_0^x\}}]$ and $A_2:=\E[f_{\varepsilon} (Y_x^{-}(t))1_{\{t\geq\tau_0^x\}}]$.
% For the computation of $A$ we will need the joint law of $\widetilde Y_x(t)1_{\{t\leq \tau_0^x\}}$ for $t\geq 0$. 
We have
\begin{align*}
P\left(Y_x^{-}(t) \leq y, t< \tau_0^x\right) &= P\left(x-\sgn (x) t + W(t) \leq y,t< \tau_0^x\right)\\
&=P\left(B(t)\geq x-y,t< \tau_0^x\right).
\end{align*}

We start with the case $x>0$. Observe that $\tau_0^x = \inf \{t>0: B(t)=x\}$ and hence $\{t < \tau_0^x\}=\{M(t) < x\}$ where $M(t):=\sup_{s\in [0,t]} B(s)$. As a consequence
\begin{align*}
P\left(Y_x^{-}(t) \leq y, t< \tau_0^x\right) &= P\left(B(t) \geq x-y,  M(t)< x\right)\\
&= \E\left[1_{\{B(t)\geq x-y,  M(t)< x\}}\right]\\
&= \E_Q \left[1_{\{B(t)\geq x-y, M(t)< x\}}\frac{1}{Z(t)}\right]
\end{align*}
where $Q$ is the equivalent measure w.r.t. $P$ defined by
$$\frac{dQ}{dP}\bigg|_{\mathcal{F}_t} =\exp\left\{-\sgn(x)\widetilde W(t)- t/2\right\}=: Z(t), \quad t\geq 0.$$
\cite[Theorem 8.6.4]{oeksendal.03} yields that the process $B(t) = \sgn(x)t + \widetilde W(t)$, $t\geq 0$ is a standard $Q$-Brownian motion and $M(t)$ is therefore the running maximum of the standard Brownian motion $B$, hence
\begin{align}\label{PYM}
P\left(Y_x^{-}(t) \leq y, t\leq\tau_0^x\right) = \int_0^{\infty}\int_{-\infty}^w 1_{\{z\geq x-y,  w< x\}}e^{\sgn(x) z - t/2}\rho_{B(t),M(t)}(z,w)dzdw
\end{align}
where $\rho_{B(t),M(t)}$ denotes the joint density of $B(t)$ and $M(t)$ which is explicitly given, see \mbox{\cite[Proposition 2.8.1]{karatzas.shreve.91}}, by
$$\rho_{B(t),M(t)}(z,w)=\frac{2(2w-z)}{\sqrt{2\pi t^3}}\exp\left\{ -\frac{(2w-z)^2}{2t}\right\}, \ z \leq w, \ w\geq 0.$$
We have
\begin{align*}
A_1 &= \frac{1}{2\varepsilon}P\left( y-\varepsilon \leq Y_x^{-}(t) \leq y+\varepsilon, t\leq \tau_0^x \right)\\
&=\frac{1}{2\varepsilon}\int_0^{\infty}\int_{-\infty}^w 1_{\{x-y-\varepsilon \leq z\leq x-y+\varepsilon,  w< x\}} e^{\sgn(x) z - t/2}\rho_{B(t),M(t)}(z,w)dzdw
\end{align*}
Finally, the above probability converges to the derivative of (\ref{PYM}) w.r.t. $y$, that is
\begin{align*}
\lim_{\varepsilon \searrow 0}&\frac{1}{2\varepsilon}P\left( y-\varepsilon \leq Y_x^{-}(t) \leq y+\varepsilon,  t< \tau_0^x \right)\\
&= e^{\sgn(x) (x-y) - t/2} \int_{x-y}^{x} \rho_{B(t),M(t)}(x-y,w)dw\\
&= \frac{1}{\sqrt{2\pi t}}e^{\sgn(x) (x-y) - t/2} \left(e^{-(x-y)^2/2t} - e^{-(x+y)^2/2t}\right)1_{\{x\geq x-y\}}\\
&= \frac{1}{\sqrt{2\pi t}}e^{-\frac{(\sgn(x)(x-y)-t)^2}{2t}}\left( 1- e^{-\frac{2xy}{t}}\right)1_{\{y\geq 0\}}. 
\end{align*}

Now we continue to compute $A_2$. Define the random variable $\tau:= \tau_0^x \vee t$. It is readily checked that $\tau \geq \tau_0^x$ and $\tau$ is $\mathcal{F}_{\tau_0^x}$-measurable because the event $\{t\geq\tau_0^x\}$ is in $\mathcal{F}_{\tau_0^x}$. Then the strong Markov property of $Y_x^{-}$ and \mbox{\cite[Corollary 2.6.18]{karatzas.shreve.91}} yield
\begin{align*}
  \E[f_{\varepsilon} (Y_x^{-}(t))1_{\{t\geq\tau_0^x\}}|\mathcal{F}_{\tau_0^x}] &= \E[f_{\varepsilon} (Y_x^{-}(\tau))1_{\{t\geq\tau_0^x\}}|\mathcal{F}_{\tau_0^x}] \\
  &= 1_{\{t\geq\tau_0^x\}}\E[f_{\varepsilon} (Y_x^{-}(\tau))|\mathcal{F}_{\tau_0^x}] \\
  &= 1_{\{t\geq\tau_0^x\}}\E[f_{\varepsilon}(Y_0^{-}(\xi))]|_{\xi = \tau-\tau_0^x}
\end{align*}
$P$-a.s. As a consequence
\begin{align*}
\E[f_{\varepsilon} (Y_x^{-}(t))1_{\{t\geq\tau_0^x\}}] &= \E\left[\E[f_{\varepsilon} (Y_x^{-}(t))1_{\{t\geq\tau_0^x\}}|\mathcal{F}_{\tau_0^x}] \right]\\
% &= \E\left[\E[f_{\varepsilon} (Y_x^{-}(t))|\mathcal{F}_{\tau_0^x}] 1_{\{t\geq\tau_0^x\}} \right]\\
&=\E\left[1_{\{t\geq\tau_0^x\}} \E[f_{\varepsilon}(Y_0^{-}(\xi))]|_{\xi = \tau-\tau_0^x} \right]\\
&=\E\left[1_{\{t\geq\tau_0^x\}} \E[f_{\varepsilon}(Y_0^{-}(\xi))]|_{\xi = t-\tau_0^x} \right].
\end{align*}
Now, the density of $Y_0^-(t)$ is explicitly known by Lemma \ref{0:densities}. Thus
$$A_2 =\E\left[\int_{\mathbb{R}} f_{\varepsilon}(z) q_{t-\tau_0^x}(0,z)1_{\{t\geq \tau_0^x\}}\right] = \int_0^t \int_{\mathbb{R}} f_{\varepsilon}(z) q_{t-s}(0,z)\rho_{\tau_0^x}(s)ds.$$

Then, letting $\varepsilon \to 0$ and by Lebesgue's dominated convergence theorem we obtain that, for $x>0$ and $y\in \mathbb{R}$
$$q_{t}(x,y) =\frac{1}{\sqrt{2\pi t}}e^{-\frac{(\sgn(x)(x-y)-t)^2}{2t}}\left( 1- e^{-\frac{2xy}{t}}\right)1_{\{y\geq 0\}} + \int_0^t   q_{t-s}(0,y) \rho_{\tau_0^x}(s)ds.$$

We have
\begin{align*}
  -Y_{-x}^-(t) &= x + \int_0^t \sgn(Y_{-x}^-(s)) ds + \widetilde W(t) \\
             &= x - \int_0^t \sgn(-Y_{-x}^-(s)) ds + \widetilde W(t)
\end{align*}
for any $t\geq 0$ and hence $(-Y_{-x}^-,\widetilde W)$ is a weak solution of \eqref{e:signum SDE} for $\pm=-$ and starting point $x$. Hence, $-Y_{-x}^-(t)$ has the same law as $Y_{x}^-(t)$ for any $t\geq 0$. Consequently, we have
 $$ q_t(x,y) = q_t(-y,-x),\quad x>0,y\in\mathbb R.$$
The claimed formula follows.
\end{proof}

Similarly, we can also obtain the density for $Y_x^{+}(t)$. The proof follows exactly the same ideas as in Lemma \ref{l:density minus} and has therefore been omitted.

\begin{lem}\label{l:density plus}
For every $t\geq 0$, the density of $Y_x^{+}(t)$ is given by
$$p_{t}(x,y):=\frac{2}{\sqrt{2\pi t}}e^{-\frac{(\sgn(x)(x-y)+t)^2}{2t}}\left(1-e^{\frac{-2xy}{t}}\right)1_{\{\sgn(xy)\geq 0\}} + \int_0^t p_{t-s}(0,y)\rho_{\theta_0^x}(s)ds.$$
for $x,y\in \mathbb{R}$, $x\neq 0$ and $\theta_0^x$ is the first hitting time of where
$$\rho_{\theta_0^x}(s)=\frac{|x|}{\sqrt{2\pi s^3}}e^{-\frac{(|x|+s)^2}{2s}}, \ \ 0<s<\infty.$$
\end{lem}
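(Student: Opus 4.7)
The plan is to follow the template of Lemma \ref{l:density minus} essentially verbatim, swapping the sign of the drift throughout. Set $\theta_0^x := \inf\{t \geq 0 : Y_x^+(t) = 0\}$. On $[0, \theta_0^x]$ the equation collapses to $Y_x^+(t) = x + \sgn(x)\, t + W(t)$, a drifted Brownian motion whose drift pushes \emph{away} from $0$. Thus $\theta_0^x$ is the first passage time at $|x|$ of a drifted Brownian motion starting at $0$ with drift $-\sgn(x)$, whose density is read off from \cite[p.\ 223, Formula 2.0.2]{borodin.salminen.96}; the ``$+s$'' in the exponent (as opposed to ``$-s$'' in Lemma \ref{l:density minus}) reflects the repelling drift, and consistently $\int_0^\infty \rho_{\theta_0^x}(s)\, ds = e^{-2|x|} < 1$, since the process need not return to $0$.

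Next decompose $\E[f_\varepsilon(Y_x^+(t))] = A_1 + A_2$ with $A_1$ the contribution of $\{t < \theta_0^x\}$ and $A_2$ that of $\{t \geq \theta_0^x\}$. For $A_1$ I assume $x > 0$; the case $x < 0$ is handled by the symmetry $p_t(x, y) = p_t(-y, -x)$, obtained as in the previous lemma by noting that $-Y_{-x}^+$ driven by $-W$ solves the same SDE. Put $\widetilde W := -W$ and $B(t) := \widetilde W(t) - t$; then $Y_x^+(t) \leq y$ iff $B(t) \geq x - y$, and $\{t < \theta_0^x\} = \{M(t) < x\}$, where $M$ is the running maximum of $B$. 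Apply Girsanov with $dQ/dP = \exp(\widetilde W(t) - t/2)$ so that $B$ becomes a standard $Q$-Brownian motion and $1/Z(t) = \exp(-B(t) - t/2)$. Inserting the joint density $\rho_{B(t), M(t)}$ from \cite[Proposition 2.8.1]{karatzas.shreve.91} and differentiating in $y$ gives $A_1 \to e^{-(x-y) - t/2}\int_{\max\{0,\, x-y\}}^{x} \rho_{B(t),M(t)}(x-y,\, w)\, dw$ on $\{y > 0\}$; the substitution $u = 2w - (x - y)$ together with the identity $(x-y)^2 + 2t(x-y) + t^2 = ((x-y) + t)^2$ collapses this to the claimed closed form, with the factor $(1 - e^{-2xy/t})$ arising from the two endpoint contributions in $u$.

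For $A_2$, the random time $\tau := \theta_0^x \vee t$ is $\mathcal F_{\theta_0^x}$-measurable on $\{t \geq \theta_0^x\}$, and the strong Markov property of $Y_x^+$ at $\theta_0^x$ (justified exactly as for $Y_x^-$ earlier in this section) yields $\E[f_\varepsilon(Y_x^+(t))\, 1_{\{t \geq \theta_0^x\}} \mid \mathcal F_{\theta_0^x}] = 1_{\{t \geq \theta_0^x\}}\, \E[f_\varepsilon(Y_0^+(\xi))]\big|_{\xi = t - \theta_0^x}$; plugging in the density $p_\xi(0, \cdot)$ from Lemma \ref{0:densities}, integrating against the law of $\theta_0^x$, and letting $\varepsilon \to 0$ gives the convolution term $\int_0^t p_{t-s}(0, y)\, \rho_{\theta_0^x}(s)\, ds$.

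The main subtlety I anticipate is the sign bookkeeping in the Girsanov step: here the auxiliary process $B$ has drift $-1$ rather than $+1$, so both the Radon--Nikodym exponent and the sign in the resulting exponential weight $e^{-z - t/2}$ flip relative to the minus case, and one must check that these combine correctly with the square completion to produce $e^{-((x-y)+t)^2/(2t)}$ rather than $e^{-((x-y)-t)^2/(2t)}$. Once the signs are in order the argument is a direct transcription of the proof of Lemma \ref{l:density minus}.
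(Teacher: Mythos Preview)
Your proposal is correct and follows exactly the approach the paper itself takes: the paper's proof simply states that the argument is the same as that of Lemma \ref{l:density minus}, with the one noted difference that $\theta_0^x$ now has an atom at infinity, $P(\theta_0^x=\infty)=1-e^{-2|x|}$, which you also identify. Your write-up in fact supplies more detail than the paper (the explicit Girsanov bookkeeping with drift $-1$, the sign tracking in the exponent, and the $A_2$ conditioning), all of which is sound.
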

\begin{proof}
The proof of this lemma follows completely the same ideas as in Lemma \ref{l:density minus}. One of the main differences is that in this case the distribution of the stopping time $\theta_0^x$ has an atom at infinity, namely, from \cite[p.223, Formula 2.0.2]{borodin.salminen.96} we have

$$\rho_{\theta_0^x} (t) = \frac{|x|}{\sqrt{2\pi t^3}} e^{- \frac{(|x|+t)^2}{2t}}, \ 0<t<\infty$$
and
$$P(\theta_0^x = \infty) = 1- e^{-2|x|}.$$

% For the case when $t<\theta_0^x$ the proof works following exactly the same steps modulus some sign variances and for the case $t\geq \theta_0^x$ one may argue similarly as in Lemma \ref{l:density minus}. The difference now is that $\theta_0^x$ may take the value infinity for which the process is not defined at. Then one can define a sequence of finite stopping times $\theta_n := (\theta_0^x \vee n)\wedge t$ for $n\geq t$ and work with $\theta_n$ instead of $\theta_0$ and then taking limit.
\end{proof}

Now we are in a position to define the functions $\alpha_{t,C}$ and $\beta_{t,C}$ for the one-dimensional case and study some of their properties. Before we do that, we will need a technical result to prove one of the properties of these functions.

\begin{prop}\label{p:stochineq}
Let $b:\mathbb R_+\times \mathbb R\rightarrow\mathbb R$ be bounded and measurable and
   $$ X_x(t) := x + \int_0^t b(s,X_x(s)) ds + W(t),\quad x\in\mathbb R, \quad t\geq 0$$
where $W$ is a $1$-dimensional Brownian motion. Then
    $$ X_x(t)\leq X_y(t)\quad P\text{-a.s.}$$
    for any $t\geq 0$, $x,y\in\mathbb R$ with $x\leq y$.
 \end{prop}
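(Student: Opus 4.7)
The plan is to reduce the comparison to a pathwise uniqueness statement beyond a meeting time. Without loss of generality assume $x<y$: if $x=y$, Veretennikov's pathwise uniqueness \cite{veretennikov.79} (valid for bounded Borel drifts) immediately gives $X_x\equiv X_y$. An attempt to apply a Gronwall-type estimate directly to $Z(t):=X_y(t)-X_x(t)=(y-x)+\int_0^t[b(s,X_y(s))-b(s,X_x(s))]\,ds$ fails because $b$ has no regularity in the space variable, so a comparison estimate of the form $|b(t,X_y)-b(t,X_x)|\leq L|X_y-X_x|$ is unavailable.

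First I would introduce the meeting time
$$\tau:=\inf\{t\geq 0:X_x(t)=X_y(t)\},\qquad \inf\emptyset:=+\infty,$$
which is a stopping time because the paths are continuous. Since $X_x(0)=x<y=X_y(0)$, continuity yields $\tau>0$ and $X_x(t)<X_y(t)$ for every $t\in[0,\tau)$; on $\{\tau<\infty\}$ continuity also forces $X_x(\tau)=X_y(\tau)$. On $\{\tau=\infty\}$ the conclusion of the proposition is already proven.

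The key step is to show that $X_x=X_y$ on $[\tau,\infty)$ on the event $\{\tau<\infty\}$ by a gluing argument. I would define
$$\widehat X(t):=X_y(t)\mathbf 1_{\{t<\tau\}}+X_x(t)\mathbf 1_{\{t\geq\tau\}},\qquad t\geq 0.$$
Then $\widehat X$ is adapted, continuous (using $X_x(\tau)=X_y(\tau)$ on $\{\tau<\infty\}$), and $\widehat X(0)=y$. Splitting the defining integrals of $X_x$ and $X_y$ at $\tau$ and using that the Brownian increments of $W$ across $\tau$ are the same for both processes, a direct calculation shows
$$\widehat X(t)=y+\int_0^t b(s,\widehat X(s))\,ds+W(t),\qquad t\geq 0.$$
Hence $\widehat X$ is a strong solution started at $y$; Veretennikov's pathwise uniqueness forces $\widehat X\equiv X_y$, i.e.\ $X_x(t)=X_y(t)$ for all $t\geq\tau$. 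Combined with $X_x(t)<X_y(t)$ on $[0,\tau)$, this establishes $X_x(t)\leq X_y(t)$ almost surely for every $t\geq 0$.

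\textbf{Main obstacle.} The delicate point is legitimising pathwise uniqueness past the meeting time without invoking an explicit strong Markov property for the merely measurable drift. The gluing trick circumvents this: it bundles the argument into a single pathwise uniqueness statement on $[0,\infty)$ from the initial condition $y$, which is exactly what \cite{veretennikov.79} provides. The only verifications to carry out are the measurability of $\tau$ and the routine splitting of the stochastic integrals at $\tau$; both are standard once pathwise continuity is in hand.
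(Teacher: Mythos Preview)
Your argument is correct and complete: the gluing construction $\widehat X$ is continuous, adapted, and a direct splitting of the drift integrals at $\tau$ (using $X_x(\tau)=X_y(\tau)$) shows it solves the SDE from $y$, so pathwise uniqueness for bounded measurable drift forces $\widehat X\equiv X_y$ and hence $X_x\leq X_y$.

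The paper takes a related but different route. It subtracts the common Brownian path, setting $Y_z(t):=X_z(t)-W(t)$, so that each $Y_z$ solves the random ODE $Y_z(t)=z+\int_0^t\widetilde b(s,Y_z(s))\,ds$ with $\widetilde b(s,\cdot)=b(s,\cdot+W(s))$. It then observes that $Z:=\min\{Y_x,Y_y\}$ again solves this ODE from $x$, adds $W$ back to get $U:=Z+W=\min\{X_x,X_y\}$ as an SDE solution from $x$, and invokes pathwise uniqueness (\cite[Theorem IX.3.5 i)]{revuz.yor.99}) to conclude $U=X_x$. Your meeting-time gluing and the paper's ``min of two solutions is a solution'' are essentially two sides of the same idea---both produce an auxiliary strong solution and finish with pathwise uniqueness---but the paper's ODE reduction sidesteps the explicit stopping time and handles all potential crossings at once, while your version is more transparent about why nothing goes wrong after the first meeting. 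Either way, no regularity of $b$ beyond boundedness and measurability is needed.
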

 \begin{proof}
   Define
   $$ Y_x(t) := X_x(t) - W(t) = x + \int_0^t b(s,Y_x(s)+W(s))ds = x+\int_0^t\widetilde{b}(s,Y_x(s))ds$$
   where the equalities hold $P$-a.s. and here $\widetilde{b}(t,z):= b(t,z+W(t))$ for any $t\geq 0$, $z\in\mathbb R$. Let $x,y\in\mathbb R$ with $x\leq y$ and define $Z(t):=\min\{Y_x(t),Y_y(t)\}$. Then
    $$ Z(t) = x + \int_0^t \widetilde{b}(s,Z(s)) ds,\quad t\geq 0.$$
  Hence $U(t) := Z(t) + W(t) = x + \int_0^t b(s,U(s)) ds + W(t)$. \cite[Theorem IX.3.5 i)]{revuz.yor.99} yields $U(t) = X_x(t)$ a.s. Observe that $U(t) = \min\{X_x(t),X_y(t)\}$ and hence
    $$ X_x(t) = U(t) \leq X_y(t),\quad t\geq 0$$
    $P$-a.s.
\end{proof}

\begin{thm}\label{t:alphabeta}
Let $q$ be the transition density of the Markov process $Y^-$ which is given in Lemma \ref{l:density minus} and $p$ the transition density for the Markov process $Y^+$ given in \mbox{Lemma \ref{l:density plus}}. Define the functions $\alpha,\beta:\mathbb{R}_{++}\times \mathbb{R}_+ \times \mathbb{R}\rightarrow (0,\infty)$ by \mbox{$\alpha_{t,C}(x):=C p_{tC^2}(Cx,0)$} and \mbox{$\beta_{t,C}(x):=C q_{tC^2}(Cx,0)$} where $t>0$, $C>0$ and $x\in\mathbb R$. Then
\begin{align}\label{alpha}
\begin{split}
\alpha_{t,C}(x)&=\int_0^{tC^2} Cp_{tC^2-s}(0,0)\rho_{\theta_0^{Cx}}(s)ds,\\
&= \int_0^{tC^2} \left(\frac{C}{\sqrt{tC^2-s}}\varphi(\sqrt{tC^2-s}) - C\Phi(-\sqrt{tC^2-s}) \right)\rho_{\theta_0^{Cx}}(s)ds, \ x\neq 0,
\end{split}
\end{align}
and
\begin{align}\label{beta}
\begin{split}
\beta_{t,C}(x) &=\int_0^{tC^2}   Cq_{tC^2 -s}(0,0) \rho_{\tau_0^{Cx}}(s)ds \\
 &= \int_0^{tC^2} \left(\frac{C}{\sqrt{tC^2-s}}\phi(\sqrt{tC^2-s})+C\Phi(\sqrt{tC^2-s})\right) \rho_{\tau_0^{Cx}}(s)ds,\quad x\neq 0
 \end{split}
\end{align}
where recall that $\rho_{\theta_0^x}$, respectively $\rho_{\tau_0^x}$ are given as in Lemma \ref{l:density plus}, respectively as in Lemma \ref{l:density minus}.

In addition, for each $t>0$ and $C> 0$ the functions $\alpha_{t,C}$ and $\beta_{t,C}$ are analytic in $\mathbb{R}\setminus\{0\}$, Lipschitz continuous in $\mathbb{R}$, symmetric, decreasing on $[0,\infty)$ and by symmetry increasing on $(-\infty,0]$. They have exponential decay of the type $o(c_1|x|e^{c_2|x|}e^{-c_3|x|^2})$ for constants $c_1,c_2, c_3>0$. Moreover, they attain their maxima at $x=0$ which are given by
$$\alpha_{t,C}(0)=Cp_{tC^2}(0,0)= \frac{1}{\sqrt{t}}\varphi\left( C\sqrt{t}\right) - C\Phi\left( -C\sqrt{t}\right)$$
and
$$\beta_{t,C}(0)=Cq_{tC^2}(0,0)=\frac{1}{\sqrt{t}}\varphi\left(C\sqrt{t}\right)+C\Phi\left(C\sqrt{t}\right).$$
\end{thm}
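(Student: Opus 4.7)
The integral formulas (\ref{alpha}) and (\ref{beta}) are obtained by specialising Lemmas~\ref{l:density plus} and~\ref{l:density minus} to $y=0$: the ``non-hit'' summand vanishes identically because of the factor $(1-e^{-2xy/t})$, leaving only the convolution with the hitting-time density, and the kernels $p_{tC^2-s}(0,0)$, $q_{tC^2-s}(0,0)$ are then rewritten via Lemma~\ref{0:densities}. The explicit values at $x=0$ are immediate from the definitions $\alpha_{t,C}(0)=Cp_{tC^2}(0,0)$, $\beta_{t,C}(0)=Cq_{tC^2}(0,0)$ together with the same specialisation. For the symmetry claim we repeat the argument used at the end of the proof of Lemma~\ref{l:density minus}: $-Y^\pm_{-x}$ together with $-W$ solves \eqref{e:signum SDE} starting at $x$, so by weak uniqueness $Y^\pm_x(t)\stackrel{d}{=}-Y^\pm_{-x}(t)$, giving $p_t(x,0)=p_t(-x,0)$ and $q_t(x,0)=q_t(-x,0)$.

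For the regularity and decay claims, the rewriting
\[
\rho_{\theta_0^x}(s)=e^{-|x|-s/2}\frac{|x|}{\sqrt{2\pi s^3}}e^{-x^2/(2s)},\qquad \rho_{\tau_0^x}(s)=e^{|x|-s/2}\frac{|x|}{\sqrt{2\pi s^3}}e^{-x^2/(2s)},
\]
obtained by completing the square in the exponent, is the key input. For $x\neq 0$ the Gaussian factor $e^{-x^2/(2s)}$ suppresses the $s^{-3/2}$ singularity at $s=0$, and each integrand in (\ref{alpha})--(\ref{beta}) is jointly real-analytic in $x$ on a complex strip around any $x_0\neq 0$; dominated convergence transfers analyticity to $\alpha_{t,C},\beta_{t,C}$ on $\mathbb R\setminus\{0\}$ and provides a locally uniform bound on their derivatives. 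Continuity at $x=0$ follows from the weak convergence $\rho_{\theta_0^x}(s)\,ds\Rightarrow\delta_0$ and $\rho_{\tau_0^x}(s)\,ds\Rightarrow\delta_0$ as $x\to 0$ (their masses, $e^{-2|x|}$ and $1$ respectively, concentrate near $s=0$), which together with the uniform derivative bound gives global Lipschitz continuity. For the decay, the trivial inequality $e^{-C^2x^2/(2s)}\leq e^{-x^2/(2t)}$ valid on $s\in[0,tC^2]$ factors a Gaussian in $x$ out of the integrals and produces $\alpha_{t,C}(x)=O(|x|e^{-C|x|}e^{-x^2/(2t)})$ and $\beta_{t,C}(x)=O(|x|e^{C|x|}e^{-x^2/(2t)})$, which is $o(c_1|x|e^{c_2|x|}e^{-c_3x^2})$ for any $c_2\geq C$ and any $0<c_3<1/(2t)$.

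The main obstacle is the monotonicity on $[0,\infty)$, since the stochastic coupling furnished by Proposition~\ref{p:stochineq} orders the random variables $Y^\pm_x(t)$ but not automatically their densities at a single point. The cleanest route is analytic: both $u(t,x):=p_t(x,0)$ and $w(t,x):=q_t(x,0)$ satisfy a Kolmogorov backward equation with constant coefficients on $(0,\infty)\times(0,\infty)$ (drift $+1$ for $u$, drift $-1$ for $w$), with initial condition $0$ on $(0,\infty)$, prescribed boundary value $p_t(0,0)$ respectively $q_t(0,0)$ at $x=0^+$, and vanishing at $+\infty$ by the decay just established. Since $\partial_xu$ and $\partial_xw$ satisfy the same PDEs on the same domain and vanish at $t=0$ and at $x=+\infty$, the parabolic maximum principle reduces the claim to showing that the boundary right-derivatives $\partial_xu(t,0^+)$ and $\partial_xw(t,0^+)$ are non-positive. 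These are computed from the integral representations after the Brownian scaling $s=x^2v$, and an explicit asymptotic expansion (using the product structure $e^{\mp|x|}$ extracted above together with the fact $\partial_\tau p_\tau(0,0)=\partial_\tau q_\tau(0,0)=-\tfrac{1}{2\sqrt{2\pi}}\tau^{-3/2}e^{-\tau/2}<0$, immediate by direct differentiation of Lemma~\ref{0:densities}) shows both are strictly negative. The maximum principle then gives $\partial_xu\leq 0$ and $\partial_xw\leq 0$ for $x>0$; combined with the evenness established above this yields the monotonicity on $[0,\infty)$ and, by symmetry, the increasing property on $(-\infty,0]$. In particular the global maxima of $\alpha_{t,C}$ and $\beta_{t,C}$ are attained at $x=0$, with the values computed in the first paragraph.
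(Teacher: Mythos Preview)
Your treatment of the integral formulas, the values at $0$, symmetry, analyticity on $\mathbb R\setminus\{0\}$, and the Gaussian tail decay follows the paper's pattern and is fine.

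There is a gap in the Lipschitz argument. Weak convergence $\rho_{\tau_0^x}(s)\,ds\Rightarrow\delta_0$ gives only \emph{continuity} of $\beta_{t,C}$ at $0$; combined with a derivative bound that is locally uniform only away from $0$, this does not yield a Lipschitz estimate (think of $1-\sqrt{|x|}$, which is continuous at $0$, analytic and monotone on $(0,\infty)$, but not Lipschitz). The paper handles this point directly: it introduces a smooth cutoff $h$ supported in $[0,t/2]$, writes $\int_0^t q_{t-s}(0,0)\rho_{\tau_0^x}(s)\,ds=\E[H(\tau_0^x)]+\int_{t/2}^t q_{t-s}(0,0)\rho_{\tau_0^x}(s)(1-h(s))\,ds$ with $H(s)=q_{t-s}(0,0)h(s)$ Lipschitz, uses $|\E H(\tau_0^x)-\E H(\tau_0^0)|\leq L\,\E|\tau_0^x-\tau_0^0|=L|x|$, and bounds the remainder by a constant times $|x|$ since $\rho_{\tau_0^x}(s)\leq C|x|$ on $[t/2,t]$.

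For monotonicity you take a genuinely different route from the paper. The paper argues probabilistically: for $0<x<y$ it couples $Y_x^-$ and $Y_y^-$ on the same Brownian motion via Proposition~\ref{p:stochineq} so that $Y_x^-(t)\leq Y_y^-(t)$, introduces $\tau=\inf\{t:-Y_x^-(t)=Y_y^-(t)\}$, notes that $|Y_x^-(t)|\leq Y_y^-(t)$ on $\{t<\tau\}$, and uses the strong Markov property at $\tau$ (where $Y_y^-(\tau)=-Y_x^-(\tau)$, hence same modulus) to conclude $P(|Y_y^-(t)|<\varepsilon)\leq P(|Y_x^-(t)|<\varepsilon)$ for every $\varepsilon>0$. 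This is self-contained and needs no boundary regularity. Your maximum-principle argument for $\partial_x w$ is a reasonable alternative in spirit, but as written it has a circularity: applying the parabolic maximum principle on $(0,\infty)\times(0,\infty)$ with boundary data at $x=0^+$ presupposes that $\partial_x w(t,0^+)$ exists, i.e.\ that $w(t,\cdot)$ is one-sided differentiable at $0$. That is strictly stronger than the Lipschitz regularity the theorem asserts, and your Lipschitz argument already has the gap above. The asymptotic expansion you sketch via $s=x^2 v$ would therefore have to establish not just the sign but the \emph{existence} of $\lim_{x\to 0^+}\partial_x\beta_{t,1}(x)$, and that step is not carried out. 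The paper's coupling bypasses this issue entirely.
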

\begin{proof}
We will carry out a more detailed proof of the properties on $\beta_{t,C}$. For the case of $\alpha_{t,C}$ the same proof, \emph{mutatis mutandis}, follows as well.

First of all, observe that $\beta_{t,C}(x)=C \beta_{tC^2,1}(Cx)$ and hence it is sufficient to carry out the proof for $C=1$ then all properties follow for arbitrary $C>0$.

At the end of the proof of Lemma \ref{l:density minus} we have shown that the law of $Y_x^-(t)$ coincides with the law of $-Y_{-x}^-(t)$. Hence, the symmetry of $\beta_{t,1}$ follows.

To show analyticity, define \mbox{$f(s,x):=  q_{t-s}(0,0) \rho_{\tau_0^{x}}(s)$} for $s\in (0,t)$ and \mbox{$x\in \mathbb{R}\setminus \{0\}$} and the family of domains $$\mathbb{S}_{\varepsilon}:= \left\{z\in \mathbb{C}: \ \varepsilon < \Re(z) < \frac{1}{\varepsilon},\ \Re(z) > 2\vert \Im(z)\vert\right\},$$
 $0<\varepsilon <1$ and $\mathbb{S}:=\cup_{0<\varepsilon <1} \mathbb{S}_{\varepsilon}$. Then for every $z\in \mathbb{S}$, $g:\mathbb{R}_+\times \mathbb{S}\rightarrow \mathbb{C}$ defined as $g(s,z):=q_{t-s}(0,0)\frac{z}{\sqrt{2\pi s^3}}e^{-\frac{(z-s)^2}{2s}}$ is the holomorphic extension of $f$ to $\mathbb{S}$. Let $\epsilon>0$, $t>0$ and let us check that $z\mapsto \int_0^{t} g(s,z)ds$ is holomorphic on $\mathbb{S}_\varepsilon$. We have $\vert z\vert \leq \sqrt{5/4}/\epsilon$, $\Re(z^2)>3\epsilon^2/4$ and hence
\begin{align*}
 \vert g(s,z)\vert &\leq \left(\frac{1}{\sqrt{t-s}}+1\right)\frac{1/\varepsilon}{\sqrt{s^3}} \vert e^{-\frac{z^2}{2s}}e^{z}e^{-s/2}\vert \\
&\leq \left(\frac{1}{\sqrt{t-s}}+1\right)\frac{1/\varepsilon}{\sqrt{ s^3}}e^{1/\varepsilon}e^{-\frac{3\varepsilon^2}{8s}}
\end{align*}
for any $s\in(0,t)$, which is integrable on $(0,t)$ for every $\varepsilon>0$. For a real differentiable function from an open domain in $\mathbb C$ to $\mathbb C$ we denote the complex conjugate differential operator by $\partial_{\bar z}$. Recall, that such a function is holomorphic if and only if its complex conjugate derivative is zero. So, by changing differentiation and integration, we have
$$\partial_{\bar{z}} \int_0^{t} g(s,z)ds = \int_0^{t} \partial_{\bar{z}} g(s,z)ds=0$$
for every $z\in \mathbb{S}_{\varepsilon}$ where the last follows since $g(t,\cdot)$ is holomorphic on $\mathbb{S}$ for every $t>0$ being thus $\int_0^{t}f(s,x)ds$ is analytic on $(0,\infty)$. For $x<0$ use the symmetry of $\beta_{t,1}$ to conclude.

In addition, $\beta_{t,1}$ is Lipschitz in 0, i.e.\ there is a constant $K>0$ such that $\vert \beta_{t,1}(0) - \beta_{t,1}(x)\vert \leq \vert x\vert K$ for any $x\in\mathbb R$. Indeed, write
\begin{align*}
\int_0^{t}   q_{t -s}(0,0) \rho_{\tau_0^{x}}(s)ds
&=E[H(\tau_0^{x})]+ \int_{t/2}^{t} q_{t -s}(0,0) \rho_{\tau_0^{x}}(s)(1-h(s))ds
\end{align*}
where $H(s):=q_{t -s}(0,0)h(s)$ where $h$ is some function which is bounded by $1$, constant $1$ near zero, constant $0$ on $[t/2,t]$ and $h\in C^{\infty}([0,t],\mathbb R)$.

We see that $H$ is Lipschitz continuous with some Lipschitz constant $L>0$ and, hence,
$$\vert \E[H(\tau_0^{x})] - \E[H(\tau_0^0)]\vert \leq L (\E\tau_0^{x}-\E\tau_0^0) = L\vert x\vert$$
for any $x>0$. Moreover,
\begin{align}\label{lemmabeta1}
\int_{t/2}^{t} q_{t -s}(0,0) \rho_{\tau_0^{x}}(s)(1-h(s))ds \leq |x| \frac{1}{\sqrt{t}} \frac{2}{\pi}\int_{1/2}^1 \left( \frac{1}{\sqrt{2\pi t}} \frac{1}{\sqrt{1-s}}+1\right)ds
\end{align}
which implies that
$$\vert \beta_{t,1}(0) - \beta_{t,1}(x)\vert \leq \vert x\vert K$$
for some constant $K>0$. Together with the analyticity outside zero we conclude that $\beta_{t,1}$ is locally Lipschitz continuous. If we have shown that $\beta_{t,1}$ is decreasing on $[0,\infty)$, then it follows that $\beta_{t,1}$ is globally Lipschitz continuous because it is positive valued.

For monotonicity, it is sufficient to show that $\beta_{t,1}$ is decreasing on $(0,\infty)$ and then symmetry and continuity yield the claimed growth properties. Consider $x\in (0,\infty)$ and $v_t^{\varepsilon}(x):=\E\left[ f_{\varepsilon}(Y_x^-(t))\right]$ where \mbox{$f_{\varepsilon}(y)=1_{\{|y|<\varepsilon\}}$}. Here, $\beta_{t,1}(x)$ is defined as the density of $Y_x^{-}(t)$ at 0. Hence, $\beta_{t,1}(x) = p_{t}(x,0)= \lim_{\varepsilon \searrow 0} \frac{1}{\varepsilon}v_{t}^{\varepsilon}(x)$. Thus it is enough to show that $v_t^{\varepsilon} (x)$ is decreasing on $(0,\infty)$ for every ${\varepsilon}>0$. Let $0<x<y<\infty$. \mbox{Proposition \ref{p:stochineq}} yields $P(\forall t\geq 0: Y_y^{-}(t)\geq Y_x^{-}(t)) = 1.$
Define $\tau := \inf\{t>0: \ -Y_x^{-}(t)=Y_y^{-}(t) \}$. \cite[Proposition 2.1.5 a)]{ethier.kurtz.86} yields that $\tau$ is a stopping time because it is the first contact time with the closed set $\{0\}$ of the continuous process $Y_x^{-}+Y_y^{-}$. Observe, that $\vert Y_x^-(t)\vert \leq Y_y^-(t)$ for any $t\in[0,\tau]$. We can write
\begin{align*}
v_t^{\varepsilon}(y)-v_t^{\varepsilon}(x)&=\E\left[\left(1_{\{|Y_y^{-}(t)|<\varepsilon\}}-1_{\{|Y_x^{-}(t)|<\varepsilon\}}\right)1_{\{t<\tau\}}\right]\\
&+\E\left[\left(1_{\{|Y_y^{-}(t)|<\varepsilon\}}-1_{\{|Y_x^{-}(t)|<\varepsilon\}}\right)1_{\{t\geq\tau\}}\right]\\
&= C_1+C_2
\end{align*}
where $C_1:=\E\left[\left(1_{\{|Y_y^{-}(t)|<\varepsilon\}}-1_{\{|Y_x^{-}(t)|<\varepsilon\}}\right)1_{\{t<\tau\}}\right]$ and $C_2$ is the other summand.
It can be seen that $C_1$ is negative since \mbox{$P(|Y_x^{-}(t)|\leq\varepsilon, t<\tau)\geq P(|Y_y^{-}(t)|\leq \varepsilon, t<\tau)$}. For the term $C_2$ we use exactly the same Markov-argument as for the term $A_2$ in \mbox{Lemma \ref{l:density minus}} by defining $\tilde{\tau} := \tau \vee t$. Then $\tilde \tau\geq \tau$ and $\tilde \tau$ is $\mathcal{F}_{\tau}$-measurable. Thus, the strong Markov property of $Y_x^{-}$ and $Y_y^{-}$ and \cite[Corollary 2.6.18]{karatzas.shreve.91} yield
\begin{align*}
\E\left[1_{\{|Y_y^{-}(t)|<\varepsilon\}}1_{\{t\geq\tau\}}|\mathcal{F}_{\tau}\right]&= \E\left[1_{\{|Y_y^{-}(\tilde \tau)|<\varepsilon\}}1_{\{t\geq\tau\}}|\mathcal{F}_{\tau}\right]\\
&=1_{\{t\geq\tau\}}\E\left[1_{\{|Y_y^{-}(\tilde \tau)|<\varepsilon\}}|\mathcal{F}_{\tau}\right]\\
&=1_{\{t\geq\tau\}}\E\left[1_{\{|Y_y^{-}(\xi)|<\varepsilon\}}|_{\xi=\tilde\tau - \tau}\right]
\end{align*}
P-a.s. On the other hand, observe that $Y_y^{-}(\tau)=-Y_x^{-}(\tau)$ by the definition of $\tau$. So
$$1_{\{t\geq\tau\}}\E\left[1_{\{|Y_y^{-}(\xi)|<\varepsilon\}}|_{\xi=\tilde\tau - \tau}\right] = 1_{\{t\geq\tau\}}\E\left[1_{\{|-Y_x^{-}(\xi)|<\varepsilon\}}|_{\xi=\tilde\tau - \tau}\right]$$
which implies that $C_2=0$. As a result
$$v_t^{\varepsilon}(y)-v_t^{\varepsilon}(x) = \E\left[\left(1_{\{|Y_y^{-}(t)|<\varepsilon\}}-1_{\{|Y_x^{-}(t)|<\varepsilon\}}\right)1_{\{t<\tau\}}\right] \leq 0$$
which implies
$$\beta_t(y)-\beta_t(x) = \lim_{\varepsilon \searrow 0}\frac{1}{\varepsilon} (v_t^{\varepsilon}(y)-v_t^{\varepsilon}(x))\leq 0$$
for every $x,y\in \mathbb{R}$ with $0<x<y$.

Finally, we show that $\beta_{t,1}$ has exponential tails. Observe that \mbox{$|q_{t-s}(0,0)|\leq \frac{1}{\sqrt{2\pi (t-t/2)}}+1$} for $s\in [0,t/2]$ and thus
\begin{align*}
\int_0^{t/2} |q_{t-s}(0,0)|\frac{|x|}{\sqrt{2\pi s^3}}e^{-\frac{(|x|-s)^2}{2s}}\leq K|x| e^{|x|}\int_0^{t/2} s^{-3/2}e^{-\frac{|x|^2}{2s}}ds
\end{align*}
where $K$ denotes the collection of constants not depending on $x>0$. Moreover, one can show that
$$\int_0^{t/2} s^{-3/2}e^{-\frac{|x|^2}{2s}}ds\leq K \frac{1}{|x|^2}e^{-\frac{|x|^2}{2t}}$$
for a constant $K>0$ independent of $x$. Altogether
$$\int_0^{t/2} |q_{t-s}(0,0)|\frac{|x|}{\sqrt{2\pi s^3}}e^{-\frac{(|x|-s)^2}{2s}}\leq K \frac{e^{|x|}}{|x|}e^{-\frac{|x|^2}{2t}}.$$
Finally, $|\rho_{\tau_0^{x}}(s)| \leq K\vert x\vert e^{-\frac{(|x|-t)^2}{2t}}$ for $s\in [t/2,t]$, $\vert x\vert>t$ which yields
$$\int_{t/2}^{t}|q_{t-s}(0,0)| |\rho_{\tau_0^{x}}(s)| ds \leq K |x|e^{-\frac{(|x|-t)^2}{2t}}.$$
\end{proof}

% \?{Open: $\alpha$ and $\beta$ multidimensional case HERE!}
From now on, let us consider the processes $Y_x^-$ and $Y_x^+$ given in Equation \eqref{e:signum SDE} for the multidimensional case, i.e. $x\in \mathbb{R}^d$, $\sgn(x):=\frac{x}{|x|}1_{\{x\neq 0\}}$ and $W$ a $d$-dimensional standard Brownian motion. We denote $x=(x_1,\dots, x_d)\in \mathbb{R}^d$, $W=(W_1,\dots, W_d)$ and $Y_x^{\pm}(t) = (Y_{x,1}^{\pm}(t), \dots, Y_{x,d}^{\pm}(t))$. Theorem \ref{t:main theorem, multi dimensional} guarantees that the density of any adapted process $X_u(t) := \int_0^t u(s)ds + W(t)$ , $u\in \mathcal{A}$, has bounds $\alpha_{d,t}:=\alpha_{d,t,1}$ and $\beta_{d,t}:=\beta_{d,t,1}$.

We start with a proposition which gives a different view on the functions $\alpha_{d,t,C}$ and $\beta_{d,t,C}$. Namely, we define $Z_x^{\pm}(t):=\vert Y_x^{\pm}(t)\vert^2$ with $Z_x^{\pm}(0) = \vert x\vert^2$ and denote $V_{\varepsilon}$ the volume of the $d$-dimensional Euclidean ball of radius $\varepsilon$ then we have
 $$ \alpha_{t,C}(x) = \limsup_{\epsilon\rightarrow 0}\frac{P(\vert Y_x^{+}(t)\vert\leq \epsilon)}{V_\epsilon}=\limsup_{\epsilon\rightarrow 0}\frac{P(Z_x^+(t)\leq \epsilon^2)}{C_d \ \epsilon^d},$$
 and
 $$ \beta_{t,C}(x) = \limsup_{\epsilon\rightarrow 0}\frac{P(\vert Y_x^{-}(t)\vert\leq \epsilon)}{V_\epsilon}=\limsup_{\epsilon\rightarrow 0}\frac{P(Z_x^-(t)\leq \epsilon^2)}{C_d \ \epsilon^d},$$
cf.\ Theorem \ref{t:main theorem, multi dimensional}, where $C_d := \frac{\pi^{d/2}}{\Gamma\left( \frac{d}{2}+1 \right)}$. In view of this equality, we are interested in the behaviour of the transition density of $(Z_x)_{x\in \mathbb{R}}$ near zero which will be exploited in Theorem \ref{t:multibounds} below.
\begin{prop}\label{p:alternative SDE}
Let $\fraum$ be a filtered probability space. Let $W$ be a $d$-dimensional Brownian motion and $Y_x^{\pm}$ be the solution to the SDE
  $$Y_x^{\pm}(t) := x \pm \int_0^t \sgn(Y_x^{\pm}(s)) ds + W(t),\quad t\geq0$$
 for any $x\in\mathbb R^d$. Define $Z_x^{\pm}(t) := \vert Y_x^{\pm}(t)\vert^2$, $B_x^{\pm}(t):=\int_0^t \sgn(Y_x^{\pm}(s))dW(s)$ for any $x\in\mathbb R^d$, $t\geq 0$. Then $(Z_x^{\pm},B_x^{\pm})$ is a solution to the SDE
  \begin{align}  \label{e:square SDE}
  dZ_x^{\pm}(t) = \left( d\pm 2\sqrt{Z_x^{\pm}(t)} \right)dt + 2\sqrt{Z_x^{\pm}(t)}dB_x^{\pm}(t),\quad Z_x^{\pm}(0)=\vert x\vert^2,\quad t\geq 0
  \end{align}
  for which pathwise uniqueness holds.
\end{prop}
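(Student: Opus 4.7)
The plan is to apply Itô's formula directly to $\vert Y_x^{\pm}\vert^2$, verify that $B_x^{\pm}$ is a Brownian motion via Lévy's characterization, and then reduce the pathwise-uniqueness claim for \eqref{e:square SDE} to the classical pathwise-uniqueness for the squared Bessel process through a Girsanov transformation.

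First I would write $Y=Y_x^{\pm}$, $Z=Z_x^{\pm}$ and apply Itô's formula to $f(y)=\vert y\vert^2=\sum_{i=1}^d y_i^2$, giving
\[
 dZ(t) = 2\, Y(t)\cdot dY(t) + d\cdot dt = \pm 2\, Y(t)\cdot\sgn(Y(t))\, dt + 2\, Y(t)\cdot dW(t) + d\cdot dt.
\]
Since $Y\cdot\sgn(Y)=\vert Y\vert 1_{\{Y\neq 0\}}$ and the set $\{Y=0\}$ carries zero Lebesgue measure a.s.\ (by Girsanov, the law of $Y$ on any finite horizon is equivalent to that of a Brownian motion started at $x$, which spends no positive time at any fixed point), the drift contributes $\pm 2\vert Y\vert\,dt = \pm 2\sqrt{Z}\,dt$. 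For the martingale part, writing $Y=\vert Y\vert\sgn(Y)=\sqrt{Z}\,\sgn(Y)$ gives
\[
 2\, Y(t)\cdot dW(t) = 2\sqrt{Z(t)}\,\sgn(Y(t))\cdot dW(t) = 2\sqrt{Z(t)}\, dB_x^{\pm}(t)
\]
by the very definition of $B_x^{\pm}$, yielding \eqref{e:square SDE} with $Z(0)=\vert x\vert^2$. That $B_x^{\pm}$ is a one-dimensional standard Brownian motion then follows from Lévy's characterization: it is a continuous local martingale with $\langle B_x^{\pm}\rangle_t = \int_0^t \vert\sgn(Y(s))\vert^2\,ds = \int_0^t 1_{\{Y(s)\neq 0\}}\,ds = t$.

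For pathwise uniqueness of \eqref{e:square SDE}, the idea is to absorb the $\pm 2\sqrt{Z}\,dt$ term into the driving Brownian motion. On a fixed finite horizon $[0,T]$ the process $M_t:=\exp(\mp B_t-t/2)$ is a genuine martingale (constant Girsanov integrand, Novikov trivially holds), and under the equivalent measure $Q=M_T\cdot P$ the process $\tilde B_t := B_t \pm t$ is a $Q$-Brownian motion. Substituting $dB=d\tilde B\mp dt$ into \eqref{e:square SDE}, the $\pm 2\sqrt{Z}\,dt$ terms cancel and the equation reduces, under $Q$, to the squared Bessel SDE of dimension $d$,
\[
 dZ(t) = d\cdot dt + 2\sqrt{Z(t)}\, d\tilde B(t),
\]
for which pathwise uniqueness is classical (Yamada--Watanabe: the diffusion coefficient $z\mapsto 2\sqrt{z^+}$ is $\tfrac12$-Hölder with $\int_{0+}du/(4u)=\infty$, while the drift is constant and hence Lipschitz). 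Since any two solutions $(Z^{(1)},B)$, $(Z^{(2)},B)$ of \eqref{e:square SDE} on the common filtered space give rise, under $Q$, to two solutions of the Bessel SDE driven by the same $\tilde B$, they coincide $Q$-a.s., and by equivalence of $P$ and $Q$ also $P$-a.s.

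The main obstacle I expect is the technical point that $\{Y=0\}$ has zero Lebesgue measure a.s., which is needed both for the drift identification $Y\cdot\sgn(Y)=\vert Y\vert$ a.e.\ and for $\langle B_x^{\pm}\rangle_t = t$; once this is obtained from Girsanov versus the underlying Brownian motion, the Itô computation and the Girsanov reduction to the Bessel equation are routine.
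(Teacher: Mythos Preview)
Your proposal is correct and follows essentially the same route as the paper: It\^o's formula on $\vert Y\vert^2$ to obtain \eqref{e:square SDE}, followed by a Girsanov change of measure that removes the $\pm 2\sqrt{Z}\,dt$ drift and reduces the equation to the squared Bessel SDE $dZ=d\,dt+2\sqrt{Z}\,d\widetilde B$, where pathwise uniqueness follows from Yamada--Watanabe (the paper cites \cite[Theorem IX.3.5 ii)]{revuz.yor.99}). Your treatment is in fact slightly more careful than the paper's: you explicitly verify via L\'evy's characterization that $B_x^{\pm}$ is a Brownian motion, isolating the point $\int_0^t 1_{\{Y(s)\neq 0\}}ds=t$ (which the paper simply asserts), and you keep track of the Girsanov signs correctly for both choices of $\pm$.
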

\begin{proof}%[Proof of Proposition \ref{p:alternative SDE}]
  Let $f:\mathbb R^d\rightarrow\mathbb R_+,x\mapsto \vert x\vert^2$. Then $ Df(x)\cdot y= 2\<x,y\>$ and $H_2f(x) = 2 I_d$ for any $x,y\in\mathbb R^d$ where $Df$ denotes the Fr\'echet differential of $f$, $H_2f$ the Hessian matrix of $f$ and $I_d$ denotes the unit matrix in $\mathbb R^{d\times d}$. It\^o's formula yields
 \begin{align*}
   Z_x^{\pm}(t) &= \vert x\vert^2 + \int_0^t (\pm 2\<Y_x^{\pm}(s),\sgn(Y_x^{\pm}(s))\> + d) ds + \int_0^t 2 Y_x^{\pm}(s) dW(s)  \\
          &= \vert x\vert^2 + \int_0^t ( \pm 2\sqrt{Z_x^{\pm}(s)} + d) ds + \int_0^t 2 \sqrt{Z_x^{\pm}(s)} dB_x^{\pm}(s)
 \end{align*}
 for any $t\geq0$. Since $B_x^{\pm}$ is a Brownian motion, $(Z_x^{\pm},B_x^{\pm})$ is a weak solution as required.

 It remains to show that the SDE has unique weak solutions. Let $Q$ be a measure, equivalent to $P$, such that $\widetilde W^{\pm}(t) := B_x^{\pm}(t) - t$ is a standard $Q$-Brownian motion. Then the SDE can be rewritten as
 \begin{align}\label{e:modified SDE}
    dZ_x^{\pm}(t) = \left( d \right)dt \pm 2\sqrt{Z_x^{\pm}(t)}d\widetilde W^{\pm}(t),\quad Z_x^{\pm}(0)=\vert x\vert^2,\quad t\geq 0.
 \end{align}
 \cite[Theorem IX.3.5 ii)]{revuz.yor.99} yields that pathwise uniqueness holds for SDE \eqref{e:modified SDE} under $Q$.
\end{proof}

The following result gives explicit bounds for the functions $\alpha_{d,t}$ and $\beta_{d,t}$.\begin{thm}\label{t:multibounds}
We have
$$\frac{2^d}{C_d d^{d/2}}\prod_{i=1}^d\alpha_{1,t}(x_i) \leq \alpha_{d,t}(x)\leq \beta_{d,t}(x)\leq \frac{2^d}{C_d}\prod_{i=1}^d\beta_{1,t}(x_i), \quad x\in \mathbb{R}^d$$
where $C_d:= \frac{\pi^{d/2}}{\Gamma\left( \frac{d}{2}+1\right)}$.
\end{thm}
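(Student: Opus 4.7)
The plan is to reduce the multidimensional bounds to the one-dimensional ones via two ingredients: the standard inclusions between the Euclidean ball and axis-parallel cubes, together with a pointwise bound on the joint density by products of 1-dim densities. The inclusions
\[\{y\in\mathbb{R}^d:|y|\leq\epsilon\}\subset[-\epsilon,\epsilon]^d\quad\text{and}\quad[-\epsilon/\sqrt{d},\epsilon/\sqrt{d}]^d\subset\{y\in\mathbb{R}^d:|y|\leq\epsilon\}\]
produce the volume ratios $(2\epsilon)^d/V_\epsilon=2^d/C_d$ and $(2\epsilon/\sqrt{d})^d/V_\epsilon=2^d/(C_d d^{d/2})$, which are exactly the constants appearing in the statement.

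Next I would observe that each coordinate satisfies $Y_{x,i}^{\pm}(t)=x_i+\int_0^t u_i(s)ds+W_i(t)$ with $u_i(s):=\pm Y_{x,i}^{\pm}(s)/|Y_x^{\pm}(s)|$ adapted to $\F$ and $|u_i|\leq 1$. Applying Theorem \ref{t:main theorem, multi dimensional} in dimension one to each component gives the marginal density bounds $\alpha_{1,t}(y-x_i)\leq\rho_{Y_{x,i}^{\pm}(t)}(y)\leq\beta_{1,t}(y-x_i)$ for every $y\in\mathbb{R}$.

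The hard part is to upgrade these marginal bounds to pointwise bounds on the joint density,
\[\prod_{i=1}^d\alpha_{1,t}(y_i-x_i)\leq\rho_{Y_x^{+}(t)}(y),\qquad \rho_{Y_x^{-}(t)}(y)\leq\prod_{i=1}^d\beta_{1,t}(y_i-x_i),\]
which I would attempt by iterated conditioning. Writing $\rho(y)=\rho_{Y_1(t)}(y_1)\prod_{i=2}^d\rho_{Y_i(t)\mid Y_1(t)=y_1,\ldots,Y_{i-1}(t)=y_{i-1}}(y_i)$, the goal is to show that each conditional factor is the density of a 1-dim process driven by $W_i$ with adapted drift bounded by $1$, to which the 1-dim version of Theorem \ref{t:main theorem, multi dimensional} can be applied. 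The subtle point, which I expect to be the main obstacle, is that conditioning on values of other coordinates at time $t$ can break the Brownian structure of $W_i$; a workaround is to first condition on the full paths $(Y_{x,j})_{j<i}$ over $[0,t]$ and then average using the conditional law at time $t$.

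Once the pointwise joint bound is in place, integration over the appropriate cube factorizes into $d$ one-dimensional integrals of the form $\int_{-\epsilon}^{\epsilon}\beta_{1,t}(y-x_i)dy\sim 2\epsilon\beta_{1,t}(x_i)$ as $\epsilon\to 0$ by continuity of $\beta_{1,t}$ (Theorem \ref{t:alphabeta}), and similarly for $\alpha_{1,t}$. Dividing by $V_\epsilon=C_d\epsilon^d$ and using the corresponding ball/cube inclusion yields the stated bounds.
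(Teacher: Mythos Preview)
Your ball/cube inclusions and the marginal bounds on each coordinate are correct, but the iterated-conditioning step has a genuine gap that you yourself flag and do not close. The coupling between coordinates is \emph{bidirectional}: the drift of $Y_{x,j}^{\pm}$ depends on $Y_{x,i}^{\pm}$ through $|Y_x^{\pm}|$ for every $j$, so the paths $(Y_{x,j}^{\pm})_{j<i}$ carry information about $W_i$. Hence, conditioning on these paths destroys the Brownian property of $W_i$, and Theorem~\ref{t:main theorem, multi dimensional} in dimension one no longer applies to the conditional law. Conditioning instead on $(W_j)_{j\neq i}$ keeps $W_i$ Brownian but then $Y(t)$ is a one-dimensional functional of $W_i$, so it has no joint density to bound. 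Neither route yields the pointwise product bound $\rho_{Y_x^{-}(t)}(y)\leq\prod_i\beta_{1,t}(y_i-x_i)$, and in fact this inequality is stronger than what the theorem asserts or what the paper proves.

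The paper bypasses the dependence issue by a comparison argument rather than conditioning. Applying It\^o's formula to $Z_{x,i}^{-}(t):=(Y_{x,i}^{-}(t))^2$ gives a drift $1-2\sqrt{Z_{x,i}^{-}}\,|Y_{x,i}^{-}|/|Y_x^{-}|\geq 1-2\sqrt{Z_{x,i}^{-}}$, and a comparison theorem then yields $Z_{x,i}^{-}(t)\geq V_i(t)$ where $V_i$ solves $dV_i=(1-2\sqrt{V_i})dt+2\sqrt{V_i}\,dB_i$ with $B_i(t)=\int_0^t\sgn(Y_{x,i}^{-})\,dW_i$. The key point is that $V_i$ is a \emph{strong} solution driven by $B_i$ alone, and $(B_1,\dots,B_d)$ is a $d$-dimensional Brownian motion, so $V_1,\dots,V_d$ are independent. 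This independence is what replaces your conditioning: one gets $P(|Y_x^{-}(t)|\leq\epsilon)\leq P(\cap_i\{V_i(t)\leq\epsilon^2\})=\prod_iP(V_i(t)\leq\epsilon^2)$ directly. Identifying the law of $V_i$ with that of the square of the one-dimensional process $Y_{x_i}^{-}$ via Proposition~\ref{p:alternative SDE} and passing to the limit gives the stated upper bound; the lower bound is obtained analogously with the reversed comparison for $Y^{+}$ and the inner cube.
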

\begin{proof}
Since the proof is fairly similar for $\alpha_{d,t}$, we will just show the  last inequality.

Define the processes $Z_{x,i}^-(t) := |Y_{x,i}^-(t)|^2$, $i=1,\dots,d$. It\^{o}'s formula yields
\begin{align*}
Z_{x,i}^-(t) &= |x_i|^2 + \int_0^t \left( 1- 2\sqrt{Z_{x,i}^-(s)}\frac{|Y_{x,i}^-(s)|}{|Y_{x}^-(s)|}\right)ds + 2\int_0^t Y_{x,i}^-(s) dW_i(s)\\
&=|x_i|^2 + \int_0^t \left( 1- 2\sqrt{Z_{x,i}^-(s)}\frac{|Y_{x,i}^-(s)|}{|Y_{x}^-(s)|}\right)ds + 2\int_0^t \sqrt{Z_{x,i}^-(s)} dB_i(s)\\
&\geq |x_i|^2 + \int_0^t \left( 1- 2\sqrt{Z_{x,i}^-(s)}\right)ds + 2\int_0^t \sqrt{Z_{x,i}^-(s)} dB_i(s)
\end{align*} 
where $B_i(t) := \int_0^t \sgn(Y_{x,i}^-(s))dW_i(s)$ defines a new standard Brownian motion w.r.t. $P$. \cite[Theorem IV.3.6]{revuz.yor.99} and It\^{o} isometry ensure that $(B_1,\dots, B_d)$ is a $d$-dimensional standard Brownian motion. Let $V_i$ be the solution of the SDE
\begin{align}\label{eqvi}
V_i(t)=|x_i|^2 + \int_0^t \left( 1-2\sqrt{V_i(s)}\right)ds + 2\int_0^t \sqrt{V_i(s)}dB_i(s)
\end{align}
for any $i=1,\dots,d$ and $Q$ be the measure, equivalent to $P$, such that $\widetilde{B}(t) := B(t) - (t,\dots,t)$ is a $Q$-Brownian motion  where $B=(B_1,\dots,B_d)$. Then, we have
\begin{align*}
Z_{x,i}^-(t) &= |x_i|^2 + \int_0^t \left( 1+ 2\sqrt{Z_{x,i}^-(s)}\left(1- \frac{|Y_{x,i}^-(s)|}{|Y_{x}^-(s)|}\right)\right)ds + 2\int_0^t \sqrt{Z_{x,i}^-(s)} d\widetilde B_i(s),\\
V_i(t) &= |x_i|^2 + \int_0^t 1 ds + 2\int_0^t \sqrt{V_i(s)}d\widetilde B_i(s).
\end{align*}
Similar arguments as in the proof of \cite[Theorem IX.3.7]{revuz.yor.99} show that $Z_{x,i}^-(t) \geq V_i(t)$ for any $t\geq 0$, $Q$-a.s.

Observe that pathwise uniqueness holds for Equation \eqref{e:square SDE} by Proposition \ref{p:alternative SDE} and hence \cite[Theorem IX.1.7 ii)]{revuz.yor.99} states that $V_i$ is a strong solution to Equation \eqref{eqvi}. Consequently, $V_i$ is $\sigma(B_i)$-measurable and hence $V_1, \dots,V_d$ are independent processes.

Now given $a=(a_1,\dots, a_d)\in \mathbb{R}^d$ one has $|a|\geq \max{\{|a_i|, i=1,\dots,d\}}$. This implies
\begin{align*}
P(|Y_x^- (t)| \leq \varepsilon) &\leq P\left(\bigcap_{i=1}^d \{|Y_{x,i}^-(t)|\leq \varepsilon\}\right)\\
&= P\left(\bigcap_{i=1}^d \{Z_{x,i}^-(t)\leq \varepsilon^2\}\right)\\
&\leq \prod_{i=1}^d P\left( V_i(t)\leq \varepsilon^2\right)
\end{align*}
where in the last step we use the inequalities $Z_{x,i}^-(t) \geq V_i(t)$ for every $t\geq 0$, $P$-a.s. and the fact that $V_1,\dots, V_d$ are independent processes.

By Proposition \ref{p:alternative SDE} the law of $V_i(t)$ under $P$ is the same as the law of $|A_i(t)|^2$ under $P$ where
$$A_i(t)=x_i - \int_0^t \sgn(A_i(s))ds+W_i(t), \quad t\geq 0$$
and the law of $A_i(t)$ is given in Lemma \ref{l:density minus}. Hence, we have
\begin{align*}
\beta_{d,t}(x) &\xleftarrow{\varepsilon \to 0} \frac{P(|Y_x^-(t)|\leq \varepsilon)}{C_d\varepsilon^d}\\
&\leq  \frac{1}{C_d}\prod_{i=1}^d \frac{P(|A_i(t)|\leq \varepsilon)}{\varepsilon}\\& \xrightarrow{\varepsilon \to 0} \frac{2^d}{C_d}\prod_{i=1}^d \beta_{1,t}(x_i)
\end{align*}
for any $t>0$.
\end{proof}

% \?{Now it comes the proof of Thm 3.5 and the OP. Add some intro here}
In order to prepare our main result of this section we will start with a series of lemmas which aims at showing the continuity condition of \cite[Theorem IX.2.11]{js.87}. The needed continuity condition is summarised in Lemma \ref{l:continuity of B}.

\begin{lem}\label{l:Pfadabschaetzung}
 Let $\fraum$ be a filtered probability space. Let $\phi:\mathbb R_+\rightarrow [0,1]$ such that $\phi$ is infinitely differentiable, $\phi$ is constant $1$ on $[0,1]$ and constant $0$ on $[2,\infty)$. Define
  $$ A_k:\mathbb R_+\times \skor\rightarrow\mathbb R, (t,f)\mapsto \int_0^t \phi(k\vert f(s)\vert)ds $$
  for any $k\in\mathbb N$. Let $b:\Omega\times\mathbb R_+\rightarrow\mathbb R^d$ be an adapted process which is bounded by $1$, $x\in\mathbb R^d$ and define
   $$ X(t) := x + \int_0^t b(s)ds + W(t),\quad t\geq 0.$$
  Then $\E(A_k(t,X)) \leq \sqrt{tc_k(t)}\exp(t/2)$ where $c_k(t):=\E(A_k(t,x+W))\rightarrow 0$ for $k\rightarrow\infty$.
\end{lem}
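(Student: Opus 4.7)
The plan is to use Girsanov's theorem to remove the drift and reduce the problem to an estimate involving only the driftless process $x+W$, then close the argument with Cauchy--Schwarz and the exponential moment bound that comes from $|b|\leq 1$.

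First I would introduce the Girsanov density $Z(t):=\exp\!\bigl(-\int_0^t b(s)\,dW(s)-\tfrac12\int_0^t|b(s)|^2ds\bigr)$. Since $b$ is bounded by $1$, Novikov's condition holds trivially, so $Z$ is a true martingale and the measure $Q$ defined by $dQ/dP=Z(t)$ on $\mathcal F_t$ makes $W'(t):=W(t)+\int_0^t b(s)\,ds$ a $Q$-Brownian motion. Under $Q$ we therefore have $X(t)=x+W'(t)$, so that the law of the path $X$ under $Q$ coincides with the law of $x+W$ under $P$. In particular $\E_Q(A_k(t,X))=c_k(t)$.

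Next I would switch measures and apply Cauchy--Schwarz:
\begin{equation*}
\E(A_k(t,X))=\E_Q\!\bigl(A_k(t,X)\,Z(t)^{-1}\bigr)\leq\sqrt{\E_Q(A_k(t,X)^2)}\,\sqrt{\E_Q(Z(t)^{-2})}.
\end{equation*}
Because $0\leq\phi\leq 1$ we have $A_k(t,X)\leq t$ pathwise, hence $A_k(t,X)^2\leq t\,A_k(t,X)$ and so $\E_Q(A_k(t,X)^2)\leq t\,c_k(t)$. For the other factor I would rewrite $Z(t)^{-1}$ in terms of the $Q$-Brownian motion $W'$: since $dW=dW'-b\,ds$,
\begin{equation*}
Z(t)^{-1}=\exp\!\Bigl(\int_0^t b(s)\,dW'(s)-\tfrac12\int_0^t|b(s)|^2ds\Bigr),
\end{equation*}
so $Z(t)^{-2}=\mathcal E(2b\cdot W')_t\cdot\exp(\int_0^t|b(s)|^2ds)$, where $\mathcal E(2b\cdot W')$ is a genuine $Q$-martingale by Novikov again. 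Using $|b|\leq 1$ to dominate the deterministic factor by $e^{t}$ yields $\E_Q(Z(t)^{-2})\leq e^{t}$, and combining the two bounds gives $\E(A_k(t,X))\leq\sqrt{t\,c_k(t)}\,e^{t/2}$.

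Finally I would establish $c_k(t)\to 0$ by Fubini and dominated convergence. Writing $c_k(t)=\int_0^t\E\bigl(\phi(k|x+W(s)|)\bigr)ds$, for each fixed $s>0$ the integrand $\phi(k|x+W(s)|)$ is bounded by $1$ and converges pointwise to $0$ on the event $\{x+W(s)\neq 0\}$, which has full probability because $W(s)$ has a density; dominated convergence gives $\E(\phi(k|x+W(s)|))\to 0$ for each $s\in(0,t]$. A second application of dominated convergence (with dominating function $1$) on $[0,t]$ then yields $c_k(t)\to 0$ as $k\to\infty$. The main conceptual step is really the change of measure plus Cauchy--Schwarz; no step is genuinely hard, but one has to be careful to express $Z^{-1}$ in terms of $W'$ so that $\E_Q(Z^{-2})$ is estimated through an exponential martingale rather than directly under $P$.
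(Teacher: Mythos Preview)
Your argument is correct and follows essentially the same route as the paper: Girsanov to make $X$ a $Q$-Brownian motion starting at $x$, Cauchy--Schwarz to split $\E(A_k(t,X))=\E_Q(A_k(t,X)Z(t)^{-1})$, the pathwise bound $A_k(t,X)^2\leq t\,A_k(t,X)$ (the paper phrases this via $\phi^2\leq\phi$ and Cauchy--Schwarz on the time integral), and dominated convergence for $c_k(t)\to 0$. The only cosmetic difference is that the paper derives $\E_Q(Z(t)^{-2})\leq e^t$ by writing $Y=1/Z$ as an It\^o integral with respect to the $Q$-Brownian motion $X$ and applying Gronwall, whereas you obtain the same bound by recognising $Z^{-2}$ as $\mathcal E(2b\cdot W')_t\exp(\int_0^t|b|^2ds)$ and invoking Novikov.
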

\begin{proof}
 Define $Z(t):=\mathcal E(-\int_0^tb(s)dW(s))= 1-\int_0^t Z(s)b(s)dW(s)$ and $dQ\vert_{\mathcal F_t}:=Z_tdP\vert_{\mathcal F_t}$. Then Girsanov's theorem \cite[Theorem III.3.24]{js.87} yields that $X$ is a $Q$-Brownian motion starting in $x$. Define $Y(t):=1/Z(t)$. Then
  $$ Y(t) = 1 + \int_0^tY(s)b(s)dX(s),\quad t\geq0$$
 and hence by Gronwall's lemma, see e.g. \cite[Appendix $\S$1]{revuz.yor.99}, $\E_Q(Y(t)^2) \leq \exp(t)$. We have
 \begin{align*}
    \E(A_k(t,X)) &= \E_Q\left(A_k(t,X)Y(t)\right)  \\
                   &\leq \sqrt{\E_Q(A_k(t,X)^2)}\sqrt{\E_Q(Y^2(t))} \\
                   &\leq \sqrt{t\E_Q(A_k(t,X))}\exp(t/2) \\
                   &= \sqrt{tc_k(t)}\exp(t/2)
 \end{align*}
 for any $t\geq 0$ where we used the Cauchy-Schwartz inequality twice and the fact that $\varphi^2\leq \varphi$. We have
  \begin{align*}
     c_k(t) &= \E(A_k(t,x+W)) \\
            &\rightarrow \E(\lambda(\{s\in[0,t]:x+W(s)=0\}) \\
            &= 0
  \end{align*}
  for $k\rightarrow\infty$ by Lebesgue's dominated convergence theorem where $\lambda$ denotes the Lebesgue measure on $\mathbb{R}$.
\end{proof}

\begin{lem}\label{l:An abschaetzung}
 Let $\fraum$ be a filtered probability space and $W$ be a $d$-dimensional standard Brownian motion. Let $(A_k)_{k\in\mathbb N}$ and $(c_k)_{k\in\mathbb N}$ be as in Lemma \ref{l:Pfadabschaetzung}. Let $(M_n)_{n\in\mathbb N}$ be a sequence of processes that converges in probability to $W$. For any $n\in\mathbb N$ let $b_n:\Omega\times\mathbb R_+\rightarrow\mathbb R^d$ be an adapted process which is bounded by $1$, $x\in\mathbb R^d$ and define
 \begin{align*}
   X_n(t) := x + \int_0^t b_n(s)ds + M_n(t),\quad t\geq0.
 \end{align*}
 Also, assume that $(X_n)_{n\in\mathbb N}$ converges in distribution to some process $X_\infty$. 
 
 Then $X_\infty$ has $P$-a.s.\ continuous sample paths and
  $$ \E A_k(t,X_\infty) \leq \sqrt{tc_k(t)}\exp(t/2),\quad t\geq 0, \quad k\in\mathbb N.$$
 Moreover, $\lambda\{s\in\mathbb R_+:X_\infty(s)=0\}=0$ $P$-a.s.\ where $\lambda$ denotes the Lebesgue measure on $\mathbb{R}$.
\end{lem}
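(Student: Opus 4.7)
I would treat the three claims in turn, using as a comparison device the auxiliary process $Y_n(t):=x+\int_0^tb_n(s)ds+W(t)$ defined on the original probability space, to which Lemma \ref{l:Pfadabschaetzung} applies directly. A standing observation is that $M_n\to W$ in probability in the Skorokhod sense, together with the continuity of $W$, implies locally uniform convergence: $\sup_{s\leq T}|M_n(s)-W(s)|\to 0$ in probability for every $T>0$ (Skorokhod convergence to a continuous limit is uniform on compacts).

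\textbf{Continuity of $X_\infty$.} Since $b_n$ is bounded by $1$, $t\mapsto\int_0^tb_n(s)ds$ is Lipschitz, so the jumps of $X_n$ agree with those of $M_n$; the locally uniform convergence above forces $\sup_{s\leq T}|\Delta X_n(s)|=\sup_{s\leq T}|\Delta M_n(s)|\to 0$ in probability. Passing to a subsequence via Skorokhod's representation on the Polish space $\skor$, I may assume $X_{n_k}\to X_\infty$ a.s.\ in Skorokhod with jump suprema converging to zero almost surely; the standard fact that a Skorokhod limit of c\`adl\`ag functions with vanishing jump suprema must be continuous then yields that $X_\infty$ has continuous sample paths $P$-a.s.

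\textbf{Estimate on $\E A_k(t,X_\infty)$ and the zero-set claim.} With $L$ a Lipschitz constant of $\phi$ and using the pathwise identity $X_n-Y_n=M_n-W$, I obtain
\[|A_k(t,X_n)-A_k(t,Y_n)|\leq Lk\int_0^t|M_n(s)-W(s)|ds\leq Lkt\sup_{s\leq t}|M_n(s)-W(s)|,\]
which tends to $0$ in probability; since $A_k(t,\cdot)\leq t$, bounded convergence combined with Lemma \ref{l:Pfadabschaetzung} applied to $Y_n$ gives $\limsup_n\E A_k(t,X_n)\leq\sqrt{tc_k(t)}\exp(t/2)$. Because $X_\infty$ has continuous paths $P$-a.s.\ and $f\mapsto A_k(t,f)$ is continuous at every continuous path, the continuous mapping theorem together with boundedness of $A_k$ yields $\E A_k(t,X_n)\to\E A_k(t,X_\infty)$, producing the claimed bound. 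Finally, $\phi\equiv 1$ on $[0,1]$ gives $1_{\{y=0\}}\leq\phi(k|y|)$, so $\int_0^t1_{\{X_\infty(s)=0\}}ds\leq A_k(t,X_\infty)$; taking expectations and letting $k\to\infty$ (using $c_k(t)\to 0$) yields the Lebesgue-measure claim on $[0,t]$ for each $t$, and a countable union in $t$ gives the global statement.

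\textbf{Main obstacle.} The delicate step is the uniform-in-$n$ replacement of the non-Brownian noise $M_n$ by the actual Brownian motion $W$ (through $Y_n$) in a way that transfers the bound of Lemma \ref{l:Pfadabschaetzung} from $Y_n$ to $X_n$ and ultimately to $X_\infty$; this fusion needs both the locally uniform control of $M_n-W$ in probability and the a.s.\ continuity of $X_\infty$ (so that the continuous mapping theorem applies to $A_k(t,\cdot)$). Everything else is then formal.
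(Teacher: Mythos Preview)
Your proposal is correct and follows essentially the same route as the paper: introduce the comparison process $Y_n(t)=x+\int_0^tb_n(s)\,ds+W(t)$, apply Lemma~\ref{l:Pfadabschaetzung} to it, and pass to the limit using boundedness and continuity of $A_k(t,\cdot)$ at continuous paths. The paper streamlines your two-step limit (first $\E A_k(t,X_n)-\E A_k(t,Y_n)\to0$ via the Lipschitz estimate, then $\E A_k(t,X_n)\to\E A_k(t,X_\infty)$ via the continuous mapping theorem) into the single observation that $X_n-Y_n=M_n-W\to0$ in probability forces $Y_n\to X_\infty$ in law directly, so that both the continuity of $X_\infty$ (as a distributional limit of the continuous $Y_n$) and the convergence $\E A_k(t,Y_n)\to\E A_k(t,X_\infty)$ follow at once.
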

\begin{proof}
 Define $Y_n(t) := x + \int_0^t b_n(s)ds + W(t)$, $t\geq0$. Then
 $$ X_n - Y_n = M_n - W \rightarrow 0 $$
 in probability for $n\rightarrow\infty$. Hence, $Y_n \rightarrow X_\infty$ in distribution. Since $Y_n$ has continuous sample paths for any $n\in\mathbb N$, $X_\infty$ has $P$-a.s.\ continuous sample paths. Let $t,\epsilon >0$. Since $A_k$ is continuous we have $\E A_k(t,X_\infty)\leq \epsilon + \E A_k(t,Y_n)$ for some $n\in\mathbb N$. Hence, by Lemma \ref{l:Pfadabschaetzung} we have
  \begin{align*}
     \E A_k(t,X_\infty) &\leq \epsilon + \E A_k(t,Y_n) \\
                        &\leq \epsilon + \sqrt{tc_k(t)}\exp(t/2).
  \end{align*}
 Thus, we have
   \begin{align*}
     \E(\lambda\{s\in[0,t]:X_\infty(s)=0\}) &\leftarrow \E A_k(t,X_\infty) \\
                                             &\rightarrow 0
   \end{align*}
  for $k\rightarrow\infty$ and $t\geq 0$. Thus $\E(\lambda\{s\in\mathbb R_+:X_\infty(s)=0\})\leq \sum_{n=1}^\infty\E(\lambda\{s\in[0,n]:X_\infty(s)=0\})=0$. The claim follows.
 \end{proof}

\begin{rem}\label{r:inequality}
 Let $x\in\mathbb R^d\backslash\{0\}$, $\epsilon\in (0,\vert x\vert)$ and $y\in\mathbb R^d$ such that $\vert x-y\vert\leq\epsilon$. Then
 $$ \vert \sgn(x)-\sgn(y)\vert \leq \sqrt{2}\left(\frac{\epsilon}{\vert x\vert}\right). $$
\end{rem}

\begin{lem}\label{l:continuity of B}
 Let $\fraum$ be a filtered proability space and $W$ be a $d$-dimensional standard Brownian motion. Let $(b_n)_{n\in\mathbb N}$ be adapted processes which are bounded by $1$. Let $x\in\mathbb R^d$, $(M_n)_{n\in\mathbb N}$ be a sequence of adapted processes which converges in probability to $W$ and define
 $$ X_n(t) := x + \int_0^tb_n(s)ds + M_n(t).$$
 Assume that $(X_n)_{n\in\mathbb N}$ converges in distribution to some process $X_\infty$ and define
 $$ B:\mathbb R_+\times \skor\rightarrow\mathbb R^d,(t,f)\mapsto -\int_0^t\sgn(f(s))ds,\quad t\geq 0 $$
  
 Then $f\mapsto B(t,f)$ is $P^{X_\infty}$-a.s.\ continuous for any $t\geq 0$.
\end{lem}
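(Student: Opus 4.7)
The plan is to exploit that the sign function is continuous everywhere except at $0$, so that the functional $B(t,\cdot)$ is continuous at any path $f$ which is continuous and spends zero Lebesgue time at $0$. The previous lemma (Lemma \ref{l:An abschaetzung}) has been tailored to guarantee exactly these two properties for $P^{X_\infty}$-almost every trajectory, which is why it is the right tool to feed into the argument.

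\medskip

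More precisely, I would introduce the set
$$\Omega_0 := \bigl\{f\in \skor : f\text{ is continuous and }\lambda\{s\in\mathbb R_+:f(s)=0\}=0\bigr\}.$$
By Lemma \ref{l:An abschaetzung}, the paths of $X_\infty$ are $P$-a.s. continuous and satisfy $\lambda\{s\in\mathbb R_+:X_\infty(s)=0\}=0$ $P$-a.s., hence $P^{X_\infty}(\Omega_0)=1$. It therefore suffices to show that $f\mapsto B(t,f)$ is continuous at every $f\in\Omega_0$.

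\medskip

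So fix $f\in\Omega_0$ and let $f_n\to f$ in $\skor$. Since the limit $f$ is continuous, a standard property of the Skorokhod topology (see e.g. \cite[Proposition VI.1.17]{js.87}) ensures that $f_n\to f$ uniformly on every compact interval; in particular $f_n(s)\to f(s)$ for every $s\in[0,t]$. Now for every $s\in[0,t]$ with $f(s)\neq 0$, the map $\sgn$ is continuous at $f(s)$, and consequently $\sgn(f_n(s))\to \sgn(f(s))$. Because $\{s\in[0,t]:f(s)=0\}$ has Lebesgue measure zero, this convergence holds for Lebesgue-almost every $s\in[0,t]$. Since $|\sgn(\cdot)|\leq 1$, the dominated convergence theorem yields
$$B(t,f_n)=-\int_0^t\sgn(f_n(s))\,ds\ \longrightarrow\ -\int_0^t\sgn(f(s))\,ds=B(t,f),$$
which is the desired continuity.

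\medskip

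The only real subtlety is the step from Skorokhod convergence to locally uniform convergence, which is why the assumption that $f$ is continuous (supplied by Lemma \ref{l:An abschaetzung}) is essential; without it, Skorokhod convergence only produces convergence after a time reparametrization, and the integrals $\int_0^t \sgn(f_n(s))\,ds$ need not be well behaved. Everything else is a bounded-convergence argument that is automatic once the two qualitative properties of $X_\infty$ packaged in Lemma \ref{l:An abschaetzung} are in hand.
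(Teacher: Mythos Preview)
Your proof is correct and arrives at the same conclusion by a somewhat different route than the paper. The paper establishes continuity via an explicit quantitative estimate: for $g_k$ with $\sup_{s\le t}|f(s)-g_k(s)|\le 1/k^2$ it splits $[0,t]$ into $\{|f(s)|\le 1/k\}$ and its complement, controls the latter using the Lipschitz-type bound $|\sgn(x)-\sgn(y)|\le \sqrt{2}\,\epsilon/|x|$ from Remark \ref{r:inequality}, and controls the former by $2A_k(t,f)$, which tends to $0$ by Lemma \ref{l:An abschaetzung}. You instead extract from Lemma \ref{l:An abschaetzung} only the two qualitative facts (continuous paths, zero occupation time at $0$), upgrade Skorokhod convergence to locally uniform convergence via continuity of the limit, and finish with dominated convergence. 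Your argument is cleaner and dispenses with Remark \ref{r:inequality} altogether; the paper's argument yields an explicit modulus of continuity along the chosen sequence, but that extra information is not used anywhere else. One minor point: the paper does not spell out the Skorokhod-to-uniform step that you make explicit, so your treatment is in fact slightly more careful on that front.
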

\begin{proof}
 Let $A_k$ be as in Lemma \ref{l:Pfadabschaetzung} for any $k\in\mathbb N$. Lemma \ref{l:An abschaetzung} yields
  $$\lambda\{s\in\mathbb R_+:X_\infty(s)=0\}=0$$ 
  $P$-a.s. and hence $A_k(X_\infty,t)\rightarrow 0$ for $k\rightarrow\infty$ $P$-a.s.

Let $t\geq 0$ and $f,g_k\in\skor$ such that $\sup\{\vert f(s)-g_k(s)\vert:s\leq t\}\leq 1/k^2$ for any $k\in\mathbb N$. Then, we have
 \begin{align*}
   \vert B(t,f) - B(t,g_k) \vert \leq& \int_0^t \vert\sgn(f(s)) - \sgn(g_k(s))\vert ds \\
   						=& \int_0^t \vert\sgn(f(s)) - \sgn(g_k(s))\vert 1_{\{\vert f(s)\vert\leq 1/k\}} ds\\
   						&+ \int_0^t \vert\sgn(f(s)) - \sgn(g_k(s))\vert 1_{\{\vert f(s)\vert > 1/k\}} ds \\
                         \leq& 2\int_0^t 1_{\{\vert f(s)\vert\leq 1/k\}} ds + t\sqrt{2}/k \\
                         \leq& 2\int_0^t \phi(k\vert f(s)\vert) ds + t\sqrt{2}/k\\
                         =& 2A_k(t,f) + t\sqrt{2}/k \\
                         \rightarrow& 0
 \end{align*}
 $P^{X_\infty}$-a.s.\ for $k\rightarrow\infty$ where we used the integral inequality, then we split the support of $f$,  Remark \ref{r:inequality} with $\epsilon=1/k^2$ and the inequality $1_{[0,1]}(x)\leq \phi(x)$ for any $x\geq 0$.
\end{proof}

In the next lemma the martingales $M_n$ converge to the Brownian motion $W$ but they, and hence the drift in $X_n$, are not adapted to the same Brownian motion. We show that they converge in our specific set-up.
\begin{lem}\label{l:convergence of the marginals}
 Let $\fraum$ be a filtered probability space. Let $W$ be a $d$-dimensional Brownian motion, $x\in\mathbb R^d$ and $M_n(t) := W(\theta_n(t))$ where $\theta_n(t):=\inf\{k/n:t< k/n\}$ for any $n\in\mathbb N$. Assume that $X_n(t) = x - \int_0^t \sgn(X_n(s))ds + M_n(t)$ for any $n\in\mathbb N$. Then $(X_n)_{n\in\mathbb N}$ converges in distribution to the solution $X$ of the SDE
 \begin{align}\label{e:minus signum SDE}
  X(t) = x - \int_0^t\sgn(X(s))ds + W(t),\quad t\geq 0. 
 \end{align}
\end{lem}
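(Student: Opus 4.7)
The plan is to combine tightness of $(X_n)_{n\in\mathbb N}$ with identification of any subsequential limit as a weak solution of \eqref{e:minus signum SDE} via the continuous mapping theorem applied to the functional $B(t,f)=-\int_0^t\sgn(f(s))\,ds$. Continuity of $B$ along typical limiting paths is exactly the content of Lemma~\ref{l:continuity of B}, so most of the machinery is already prepared; uniqueness in law for \eqref{e:minus signum SDE} then upgrades subsequential convergence to full convergence.

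For tightness in $\skor$, first note that $|\theta_n(t)-t|\le 1/n$ together with the $P$-a.s.\ uniform continuity of $W$ on compact intervals gives $\sup_{s\le T}|M_n(s)-W(s)|\to 0$ $P$-a.s.\ for each $T\ge 0$; hence $M_n\to W$ in probability in $\skor$ and, in particular, $(M_n)$ is tight. Since $|\sgn|\le 1$, the drift part $B(\cdot,X_n)$ is $1$-Lipschitz in time uniformly in $n$, so standard tightness criteria yield tightness of $(X_n)_{n\in\mathbb N}$ as well.

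Next I identify the limit. Let $(X_{n_k})$ be any subsequence converging in distribution to some $X_\infty$. By joint tightness of $(X_{n_k},M_{n_k})$ together with $M_n\to W$ in probability, a further subsequence (still denoted $n_k$) satisfies $(X_{n_k},M_{n_k})\to(X_\infty,W)$ in distribution on $\skor\times\skor$. Lemma~\ref{l:An abschaetzung} applies (with $b_n=-\sgn(X_n)$), so $X_\infty$ has continuous sample paths $P$-a.s.\ and $\lambda\{s\in\mathbb R_+:X_\infty(s)=0\}=0$ $P$-a.s.; Lemma~\ref{l:continuity of B} then gives that $f\mapsto B(t,f)$ is $P^{X_\infty}$-a.s.\ continuous for every $t\ge 0$. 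The continuous mapping theorem applied to both sides of the identity $X_{n_k}(t)-M_{n_k}(t)=x+B(t,X_{n_k})$ yields
\begin{equation*}
X_\infty(t)-W(t)=x+B(t,X_\infty),\qquad t\ge 0,
\end{equation*}
so $X_\infty$ is a weak solution of \eqref{e:minus signum SDE}.

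Finally, pathwise uniqueness for \eqref{e:minus signum SDE}, cited via \cite{veretennikov.79} at the start of the section, implies uniqueness in law. Hence every subsequential limit has the same law as $X$, and so the whole sequence $(X_n)_{n\in\mathbb N}$ converges in distribution to $X$. The only genuinely delicate step is the identification, the main obstacle being the discontinuity of $\sgn$ at the origin; Lemmas~\ref{l:An abschaetzung} and~\ref{l:continuity of B} were proved precisely to handle this, by showing that the limit process spends zero Lebesgue time at the singularity and hence perceives $B(t,\cdot)$ as an almost sure continuity point.
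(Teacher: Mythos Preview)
There is a genuine gap: you invoke Lemmas~\ref{l:An abschaetzung} and~\ref{l:continuity of B} with $b_n=-\sgn(X_n)$ and $M_n$, but those lemmas require $b_n$ (and $M_n$) to be adapted to a filtration with respect to which $W$ is a Brownian motion --- ultimately because their proofs rest on the Girsanov argument in Lemma~\ref{l:Pfadabschaetzung}, where one forms $\mathcal E\bigl(-\int_0^\cdot b(s)\,dW(s)\bigr)$. Here $\theta_n(t)=\inf\{k/n:t<k/n\}>t$ for every $t$, so $M_n(t)=W(\theta_n(t))$ looks strictly into the future of $W$; consequently $X_n$ and $b_n=-\sgn(X_n)$ are \emph{not} $(\mathcal F_t^W)$-adapted, and enlarging the filtration to make them adapted destroys the Brownian property of $W$. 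So the hypotheses of the two lemmas you cite are not met, and the key estimate $\E A_k(t,X_\infty)\le \sqrt{tc_k(t)}\,e^{t/2}$ --- and hence the $P^{X_\infty}$-a.s.\ continuity of $B(t,\cdot)$ --- is not justified.

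This is precisely why the paper's proof does not apply the lemmas directly: it first replaces $M_n$ by an equal-in-law but $(\mathcal F_t)$-adapted process $\widetilde M_n(t)=H_n+W(\tilde\theta_n(t))$, with $\tilde\theta_n(t)=\theta_n(t)-1/n\le t$ and an independent $\mathcal F_0$-measurable Gaussian $H_n\sim N(0,I_d/n)$, so that $(\widetilde X_n,\widetilde M_n)$ has the same law as $(X_n,M_n)$ while now satisfying the adaptedness hypotheses. Only after this substitution can the machinery of Lemmas~\ref{l:An abschaetzung}--\ref{l:continuity of B} be invoked. A secondary point: your passage from the limiting identity $X_\infty(t)-W(t)=x+B(t,X_\infty)$ to ``$X_\infty$ is a weak solution'' also needs an argument that the limiting $W$ is a Brownian motion with respect to a filtration to which $X_\infty$ is adapted; the paper sidesteps this by using \cite[Theorem~IX.2.11]{js.87} on semimartingale characteristics and then reading off the continuous martingale part.
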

\begin{proof}
 By an independent enlargement of $\mathcal F_0$-argument, we may assume that there is a sequence $(H_n)_{n\in\mathbb N}$ of random variables which are indepent of $W$, $\mathcal F_0$-measurable and that $H_n$ is centered normal on $\mathbb R^d$ with variance $I_d/n$ where $I_d$ denotes the identity matrix in $\mathbb R^{d\times d}$.

 Define $\tilde\theta_n(t):=\theta_n(t)-1/n=\max\{k/n:k\geq 0, k/n\leq t\}$ for any $n\in\mathbb N$. Then $0\leq \tilde\theta_n(t) \leq t$. Define the $\mathcal F$-adapted process $\widetilde M_n(t) := H_n + W(\widetilde\theta_n(t))$ and $\widetilde X_n(t) = x - \int_0^t \sgn(\widetilde X_n(s))ds + \widetilde M_n(t)$. Then $M_n$ has the same law as $\widetilde M_n$ and, consequently, $(X_n,M_n)$ has the same law as $(\widetilde X_n,\widetilde M_n)$ for any $n\in\mathbb N$. Moreover, $\widetilde M_n\rightarrow W$ in probability. 

 Define 
 \begin{align*}
  B(t,f) &:= -\int_0^t \sgn(f(s)) ds,\\
  C(t,f) &:= tI_d, \\
  \nu(A\times I)  &:=0,\\
  B_n(t) &:= B(t,\widetilde X_n),\\
  C_n(t) &:= 0I_d=0,\\
  \nu_n(A\times I) &:= \mu_n(A)\sum_{k=1}^\infty \delta_{k/n}(I)\\
 \end{align*}
 for any $t\in\mathbb R_+$, $f\in\skor$, $n\in\mathbb N$, $A\in\mathcal B(\mathbb R^d)$, $I\in\mathcal B(\mathbb R_+)$ where $\mu_n$ is the centered normal law with covariance matrix $I_d/n$. Then $(B_n,C_n,\nu_n)$ is the semimartingale characteristics of $X_n$ in the sense of \cite[Definition II.2.6]{js.87} relative to the truncation function $h(x):=\sgn(x)(\vert x\vert\wedge 1)$, $x\in\mathbb R^d$. Observe that $(B_n,C_n,\nu_n)_{n\in\mathbb N}$ and $(B,C,\nu)$ fulfil the conditions $[\mathrm{Sup-}\beta_7]$, $[\mathrm{Sup-}\gamma_7]$ and $[\mathrm{Sup-}\delta_{7,1}]$ in the sense of \cite[page 535]{js.87}. Thus \cite[Theorem IX.3.9]{js.87} states that $(\widetilde X_n)_{n\in\mathbb N}$ is tight. Let $(\widetilde X_{n_j})_{j\in\mathbb N}$ be a subsequence of $(\widetilde X_n)_{n\in\mathbb N}$ which converges in law and denote the limiting law by $P_\infty$. Lemma \ref{l:continuity of B} yields that $B$ is $P_\infty$-a.s.\ continuous. Let $Y$ be the canonical process on the canonical space $(\skor,(\mathcal G_t)_{t\geq 0},\mathcal B(\skor))$. Then \cite[Theorem IX.2.11]{js.87} yields that $Y$ is, under $P_\infty$, a semimartingale with characteristics $(B,C,\nu)$. The continuous martingale part, denote it $\widetilde W$, of $Y$ is a standard Brownian motion because its semimartingale characteristics is $(0,C,0)$. Moreover,
 $$ Y(t) = x + B(t,Y) + \widetilde W(t) = x - \int_0^t \sgn(Y(s))ds + \widetilde W(t),\quad t\geq 0. $$

 Thus $(Y,\widetilde W)$ is a weak solution to the SDE \eqref{e:minus signum SDE}. \cite[Corollary IX.1.12)]{revuz.yor.99} yields that the law $P_\infty$ of $Y$ coincides with the law of the solution $X$ of the SDE \eqref{e:minus signum SDE}. Consequently, any convergent subsequence of  $(\widetilde X_{n})_{n\in\mathbb N}$ converges in law to $X$. Since $(\widetilde X_n)_{n\in\mathbb N}$ is additionally tight, it, and hence $(X_n)_{n\in\mathbb N}$, converges to $X$.
\end{proof}

We are now in readiness to prove the core result of this section which is to solve a control problem. For dimension one, this problem has been studied by V. E. Bene\v{s} in \cite{benes.74} in the Markovian setting whose optimal control is indeed the \emph{signum} function in dimension one. Such solutions are known as \emph{bang-bang} solutions. Nevertheless, here we stress the fact that our case deals with multivalued controls, thus not \emph{bang-bang}, in addition to the non-Markovian setting as in the above mentioned result. For this reason we include a short proof based on the previous limit results.

\begin{thm}\label{t:control problem}
 Let $\mathcal{A}_{+}$ and $\mathcal{A}$ be as in the beginning of Section \ref{main results}. Let $T,\epsilon>0$, $x\in\mathbb R^d$ and define $u_x^* (t):=\sgn(Y_x^{+}(t))$ and $v_x^*(t):=-\sgn(Y_x^{-}(t))$. Then 
\begin{align}\label{OPplus}
     \inf_{u\in\mathcal A}P(\vert X_u(T)\vert \leq \epsilon) = P(\vert X_{u_x^*}(T)\vert \leq \epsilon)
  \end{align}  
 where $X_u(t) := x + \int_0^t u(s)ds + W(t)$ for $u\in \mathcal A$. In other words, an optimal control for the control problem above is given by $u_x^*$.
Similarly,  
  \begin{align}\label{OPminus}
     \sup_{v\in\mathcal A}P(\vert X_v(T)\vert \leq \epsilon) = P(\vert X_{v_x^*}(T)\vert \leq \epsilon).
  \end{align}
% In other words, an optimal control for the control problem above is given by $v_x^*$.
\end{thm}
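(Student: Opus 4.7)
Since $v_x^*\in\mathcal A$ and $u_x^*\in\mathcal A$, the trivial inequalities are $\sup\ge$ in \eqref{OPminus} and $\inf\le$ in \eqref{OPplus}. I concentrate on \eqref{OPminus}, since \eqref{OPplus} is symmetric with $-\sgn$ replaced by $+\sgn$. The plan is: (i) discretise in time on a dyadic grid and identify the optimal feedback via dynamic programming; (ii) pass to the weak limit of the resulting discrete-time optima using the machinery of Lemmas \ref{l:Pfadabschaetzung}--\ref{l:convergence of the marginals}; (iii) approximate an arbitrary $v\in\mathcal A$ by its piecewise constant projections.

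\textbf{Dynamic programming on the dyadic grid.} Fix $n\in\mathbb N$ and let $\mathcal A_n\subset\mathcal A$ denote the controls that are constant and $\mathcal F_{k/n}$-measurable on each interval $[k/n,(k+1)/n)$. Bellman's principle for
\[
V_k^n(y):=\sup_{v\in\mathcal A_n}P(|X_v(T)|\le\epsilon\mid X_v(k/n)=y)
\]
takes the form
\[
V_k^n(y)=\sup_{|c|\le 1}\E\bigl[V_{k+1}^n(y+c/n+Z_n)\bigr],\qquad V_{\lceil nT\rceil}^n(y)=1_{\{|y|\le\epsilon\}},
\]
with $Z_n\sim\mathcal N(0,I_d/n)$. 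By backward induction I show $V_k^n(y)=h_k^n(|y|)$ with $h_k^n$ nonincreasing. The inductive step rests on the elementary lemma: for a nonincreasing $h:[0,\infty)\to[0,1]$, the map $a\mapsto \E[h(|a+Z_n|)]$ depends only on $|a|$ and is nonincreasing in $|a|$; this follows from the rotational invariance of $Z_n$ and the decomposition $|a+Z_n|^2=(|a|+Z_n^{\parallel})^2+|Z_n^{\perp}|^2$, combined with the classical stochastic monotonicity of folded normals in the location parameter. Consequently the optimiser $c^*(y)$ minimises $|y+c/n|$ over $|c|\le 1$, giving $c^*(y)=-\sgn(y)\min(n|y|,1)$, which equals $-\sgn(y)$ whenever $|y|>1/n$.

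\textbf{Weak limit of the discrete optima.} The feedback $c^*$ defines the optimal discretised process $X_n^*(t)=x+\int_0^t c^*\bigl(X_n^*(\theta_n(s)^-)\bigr)\,ds+W(t)$, for which $P(|X_n^*(T)|\le\epsilon)=V_0^n(x)$. I then follow the semimartingale-characteristic argument in Lemma \ref{l:convergence of the marginals}, with two additions. First, the tightness estimate of Lemma \ref{l:Pfadabschaetzung} together with Lemma \ref{l:An abschaetzung} ensures that any weak limit spends no Lebesgue time at $0$, so the truncation hidden in $\min(n|y|,1)$ only alters the drift on an asymptotically negligible set. Second, Lemma \ref{l:continuity of B} provides the $P$-a.s.\ continuity of the drift functional required by \cite[Theorem IX.2.11]{js.87}. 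This yields $X_n^*\Rightarrow Y_x^-$, and since $Y_x^-(T)$ admits the Lebesgue density of Lemma \ref{l:density minus}, the sphere $\partial \bar B(0,\epsilon)$ is a continuity set of its distribution, so $V_0^n(x)=P(|X_n^*(T)|\le\epsilon)\to P(|Y_x^-(T)|\le\epsilon)$.

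\textbf{Approximation and conclusion.} Given $v\in\mathcal A$, define $v_n(t):=n\int_{((k-1)/n)_+}^{k/n}v(s)\,ds$ for $t\in[k/n,(k+1)/n)$, so that $v_n\in\mathcal A_n$. Lebesgue's differentiation theorem and dominated convergence give $\int_0^T v_n(s)\,ds\to \int_0^T v(s)\,ds$ in $L^1(P)$, whence $X_{v_n}(T)\to X_v(T)$ in probability. Corollary \ref{k:bound for SDE with additive noise} shows $X_v(T)$ has a Lebesgue density, so $P(|X_{v_n}(T)|\le\epsilon)\to P(|X_v(T)|\le\epsilon)$ and
\[
P(|X_v(T)|\le\epsilon)=\lim_n P(|X_{v_n}(T)|\le\epsilon)\le \lim_n V_0^n(x)=P(|X_{v_x^*}(T)|\le\epsilon).
\]
Taking the supremum yields \eqref{OPminus}. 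The main obstacle is the weak convergence $X_n^*\Rightarrow Y_x^-$: the discrete feedback $c^*$ disagrees with $-\sgn$ on $\{|X_n^*(\theta_n(\cdot)^-)|\le 1/n\}$, and showing that this discrepancy vanishes in the semimartingale-characteristic limit is precisely what Lemmas \ref{l:Pfadabschaetzung}--\ref{l:convergence of the marginals} have been prepared for.
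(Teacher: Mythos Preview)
Your argument is sound in outline and shares with the paper the discretise--dynamic-programming--weak-limit skeleton, but the implementation diverges at one structural point that makes the paper's version considerably shorter. The paper defines its $\mathcal A_n$ via the \emph{look-ahead} filtration $(\mathcal F_{\theta_n(t)})_{t\ge0}$ with $\theta_n(t)=\inf\{Tk/n:t<Tk/n\}\ge t$; since every $\mathcal F$-adapted control is then automatically in $\mathcal A_n$, one has $\mathcal A\subset\mathcal A_n$ and the comparison $\sup_{v\in\mathcal A}\le\sup_{v\in\mathcal A_n}$ is free. Your step (iii)---approximating an arbitrary $v\in\mathcal A$ by piecewise constant $v_n$---is therefore unnecessary in the paper. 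Moreover, because the look-ahead control may ``see'' the next Brownian increment, the paper's discrete optimum is realised by the process $X_n(t)=x-\int_0^t\sgn(X_n(s))\,ds+M_n(t)$ with the \emph{exact} drift $-\sgn$ and the look-ahead noise $M_n$; this is precisely the object of Lemma~\ref{l:convergence of the marginals}, so the weak convergence to $Y_x^-$ is available verbatim. In your version the discrete feedback $c^*(y)=-\sgn(y)\min(n|y|,1)$ differs from $-\sgn$ near the origin and is evaluated at a delayed state, so Lemma~\ref{l:convergence of the marginals} does not apply directly; the adaptation you sketch (via Lemmas~\ref{l:Pfadabschaetzung}--\ref{l:continuity of B}) is plausible but would have to be written out. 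What your route buys is that it stays within the genuinely adapted class throughout, which some readers may find more natural; what the paper's look-ahead trick buys is the elimination of both the approximation step and the truncation/delay issues.

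One genuine, if easily repaired, circularity: you appeal to Corollary~\ref{k:bound for SDE with additive noise} to conclude that $X_v(T)$ has a Lebesgue density. That corollary is derived from Theorem~\ref{t:main theorem, multi dimensional}, whose proof in turn rests on the present theorem. Replace this by the direct Girsanov argument (as in Lemma~\ref{l:Pfadabschaetzung}): the law of $X_v(T)$ is equivalent to that of $x+W(T)$, hence absolutely continuous, and in particular the sphere $\{|y|=\epsilon\}$ has zero mass.
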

\begin{rem}
The control problem given in (\ref{OPplus}) can be interpreted as follows: one wishes to find the stochastic process among those in $\mathcal{A}$ that minimises the probability that the underlying process $X$ is near zero. In other words, we want the process $X_{u}(T)$ to escape from 0 as much as possible. Intuitively, the process $Y_x^{+}$ is doing that. Whenever $Y_x^{+}(t)$ is near zero on the positive line, the drift $\sgn (Y_x^{+}(t))$ is positive and pushes $Y_x^{+}(t)$ even further away up and if $Y_x^{+}(t)$ is near zero from below the drift is negative and sends $Y_x^{+}(t)$ further down. For the control problem in (\ref{OPminus}) the idea is similar, but there one wishes to maximise the probability of being close to zero, which $-\sgn( Y_x^{-}(t))$ clearly does.

For a general reference on control problems we relate to {\O}ksendal and Sulem \cite{oeksendal.sulem.07}.
\end{rem}
\begin{proof}[Proof of Theorem \ref{t:control problem}]
For the sake of brevity we will only show the proof of the control for (\ref{OPminus}).

For any $n\in \mathbb N$ define $\theta_n(t):=\inf\{Tk/n:k\in\mathbb N,t<T k/n\}$, $M_n(t):=W(\theta_n(t))$ and
 $$ \mathcal A_n:=\{v\in\mathcal A_+: v(t)\text{ is }\F_{\theta_n(t)}\text{-measurable for any }t\in[0,T]\}$$
 Then $M_n$ is adapted to the filtration $(\mathcal G_{n,t})_{t\geq 0}:=(\F_{\theta_n(t)})_{t\geq 0}$.

 Let $X_n(t) = x - \int_0^t\sgn(X_n(s))ds + M_n(t)$, $t\geq 0$. A simple backward induction yields that
  $$ P(\vert X_n(T-)\vert \leq \epsilon) = \sup_{v\in\mathcal A_n} P\left(\left\vert x+\int_0^Tv(s)ds + M_n(T-)\right\vert\leq\epsilon\right).$$

Lemma \ref{l:convergence of the marginals} yields that $(X_n)_{n\in\mathbb N}$ converges in law to $Y_x^-$. Since $Y_x^-(T)$ has no atoms, we have $P(\vert X_n(T)\vert\leq \epsilon)\rightarrow P(\vert Y_x^-(T)\vert\leq\epsilon)$ for $n\rightarrow\infty$. Thus, we have
 \begin{align*}
   P(\vert Y_x^-(T)\vert\leq \epsilon) &\leq \sup_{v\in\mathcal A}P(\vert X_v(T)\vert \leq \epsilon) \\
   &\leq \sup_{v\in \mathcal{A}_n}P(\vert X_v(T)\vert \leq \epsilon)\\
                                    &\leq \sup_{v\in\mathcal A_n}P\left(\left\vert x+\int_0^Tv(s)ds + M_n(T-)\right\vert \leq \epsilon\right) \\
                                    &= P(\vert X_n(T-)\vert\leq \epsilon) \\
                                    &\rightarrow P(\vert Y_x^-(T)\vert\leq\epsilon)
 \end{align*}
 for $n\rightarrow\infty$. Thus $v_x^*$ is an optimal control.

\end{proof}

Finally, we give the proof of our main result Theorem \ref{t:main theorem, multi dimensional}.
\begin{proof}[Proof of Theorem \ref{t:main theorem, multi dimensional}]
  Define $\widetilde X(t) := CX(t/C^2)$, $\widetilde u(t):=u(t/C^2)$ and the Brownian motion $\widetilde W(t):=CW(t/C^2)$. Then
  \begin{align*}
    \widetilde X(t) &= \int_0^{t/C^2} C^2u(s) ds + \widetilde W(t) \\   
                      &= \int_0^t\widetilde u(s) ds + \widetilde W(t)
  \end{align*}
 for any $t\geq 0$. Theorem \ref{t:control problem} states that
  $$ P(\vert \widetilde X(T)+x\vert \leq \epsilon) \leq P(\vert Y_x^-(T)\vert\leq\epsilon) $$
 for any $\epsilon,T>0$, $x\in\mathbb R^d$ and $u\in \mathcal{A}$. By definition
 $$ \lim_{\epsilon\rightarrow0}\frac{P(\vert Y_x^-(T)\vert\leq\epsilon)}{V_\epsilon} =\beta_{d,T,1}(x).$$
 Thus we have
 $$ \rho_{C,T}(x):=\limsup_{\epsilon\rightarrow0}\frac{P(\vert \widetilde X(T)-x\vert \leq \epsilon)}{V_\epsilon} \leq \beta_{d,T,1}(-x).$$
Observe that for any orthonormal transformation $U:\mathbb{R}^d\rightarrow \mathbb{R}^d$ we have
$$UY_x^{-}(t) = Ux - \int_0^t \sgn (UY_x^-(s)) ds + UW(t)$$
where here $UW$ is a standard Brownian motion and hence $(UY_x^-, UW)$ is a weak solution of \eqref{e:signum SDE} for $\pm = -$. Consequently, $UY_x^- (t)$ has the same law as $Y_{Ux}^-$ which implies $\beta_{d,T,1}(Ux) =\beta_{d,T,1}(x)$. Hence, we have
$$\rho_{C,T}(x) \leq \beta_{d,T,1}(x).$$
Lebesgue differentiation theorem \cite[Corollary 2.1.16]{grafakos.08} yields that $\rho_{C,T}$ is a version of the Lebesgue density of $\widetilde X(T)$. Consequently, the density $\rho_T$ of $X(T)$ given by
  $$ \rho_T(x) := \limsup_{\epsilon\rightarrow0}\frac{P(\vert X(T)-x\vert \leq \epsilon)}{V_\epsilon} $$
 satisfies
  $$ \rho_T(x) \leq \beta_{d,T,C}(x). $$ 
 Analogue arguments show that
 $$ \alpha_{d,T,C}(x) \leq \rho_T(x).$$
\end{proof}

% \bibliographystyle{amsplain}
% \bibliography{bib}

\end{document}